\numberwithin{equation}{section}
\newcommand{\R}{\mathbb R}
\newcommand{\C}{\mathbb C}
\newcommand{\Z}{\mathbb Z}
\newcommand{\cH}{\mathcal H}
\newcommand{\cI}{\mathcal I}
\newcommand{\cN}{\mathcal N}
\newcommand{\cT}{\mathcal T}
\newcommand{\be}{\begin{equation}}
\newcommand{\ee}{\end{equation}}
\newcommand{\ba}{\begin{eqnarray}}
\newcommand{\ea}{\end{eqnarray}}
\def\R{\mathbb{R}}
\newtheorem{prop}{Proposition}[section]
\newtheorem{thm}{Theorem}
\newtheorem{lem}{Lemma}
\newtheorem{Rk}{Remark}
\newtheorem{Exam}{Example}
\begin{document}

\title{Finite-time stabilization of a network of strings}

\author{Fatiha Alabau-Boussouira}
\address{Institut Elie Cartan de Lorraine, UMR 7502, Universit\'e de Lorraine, 
F-57045, Metz, France}
\email{alabau@univ-metz.fr}

\author{Vincent Perrollaz}
\address{Laboratoire de Math\'ematiques et Physique Th\'eorique, 
Universit\'e de Tours,
UFR Sciences et Techniques,
Parc de Grandmont,
37200 Tours, France}
\email{Vincent.Perrollaz@lmpt.univ-tours.fr}

\author{Lionel Rosier}
\address{Centre Automatique et Syst\`emes, MINES ParisTech, PSL Research University, 60 Boulevard Saint-Michel, 75272 Paris Cedex 06, France}
\email{Lionel.Rosier@mines-paristech.fr}

\keywords{} 
\subjclass{}

\begin{abstract} 
We investigate the finite-time stabilization of a tree-shaped network of strings. Transparent boundary conditions are applied  at all the external nodes.
At any internal node, in addition to the usual continuity conditions, a modified Kirchhoff law incorporating a  damping term $\alpha u_t$ with a
coefficient $\alpha$ that may depend on the node is considered. 
We show that for a convenient choice of the sequence of coefficients $\alpha$, any solution of the wave equation on the network becomes constant 
after a finite time. The condition on the coefficients proves to be sharp at least for a star-shaped tree. Similar results are derived when we replace the transparent
boundary condition by the Dirichlet (resp. Neumann) boundary condition at one external node.   
\end{abstract}

\maketitle
\section{Introduction}
Solutions of certain ODE $\dot x = f(x)$ may reach the equilibrium state in finite time. This phenomenon, when combined with the stability,  was termed
{\em finite-time} stability in \cite{BB,haimo}. 

A {\em finite-time stabilizer} is a feedback control for which the closed-loop system is finite-time stable around some equilibrium state. 
In some sense, it satisfies a controllability  objective with a control in feedback form. On the other hand, a finite-time stabilizer may 
be seen as an exponential stabilizer yielding an arbitrarily large decay rate for the solutions to the closed-loop system. Indeed, any solution
of the closed-loop system can be  estimated as
\[
||x(t)|| \le h(||x_0||){\bf 1}_{[0,T]}(t) \le h(||x_0||)e^{-\lambda (t-T)}
\]
where $h(\delta )\to 0$ as $\delta \to 0$, and $\lambda>0$ is arbitrarily large. 
This explains why some efforts were made in the last decade to construct finite-time stabilizers for controllable systems, including  the linear ones. 
See  \cite{MP} for some recent developments
and up-to-date references, and \cite{BR} for some connections with Lyapunov theory. 

To the best knowledge of the authors, the analysis of the finite-time stabilization of PDE is not developed yet. However, since \cite{majda}, it is well-known 
that solutions of the wave equation on certain bounded domains may disappear when using {\em transparent} boundary conditions. For instance, the solution of 
the 1-D wave equation 
\ba
u_{tt} - c^2u_{xx}=0,&&  \text{in }  (0,T)\times (0,L), \label{Int1}\\
c u_x(L,t)= - u_t(L,t),&& \text{in }  (0,T),  \label{Int2}\\
c u_x(0,t)= u_t (0,t),&& \text{in }  (0,T),  \label{Int3}\\
(u(0),u_t(0))=(u^0,u^1), &&\text{in } (0,L),\label{Int4}
\ea
is finite-time stable in the space $\{ (u,v)\in H^1(0,L)\times L^2(0,L); \ c\big( u(0)+u(L)\big) +\int_0^L v(x)dx=0\}$, with $T=L/c$ as extinction time (see e.g. \cite[Theorem 0.5]{komornik}
for the details.) The condition \eqref{Int2} is ``transparent'' in the sense that a wave $u(x,t)=f(x-ct)$ traveling to the right satisfies \eqref{Int2} and leaves the domain
at $x=L$ without generating any reflected wave. Note that the solution issued from any state $(u^0,u^1)\in H^1(0,L)\times L^2(0,L)$ is not necessarily vanishing, but constant, for  
$t\ge L/c$. 
Note also that if we replace \eqref{Int3} by the boundary condition $u(0,t)=0$ (or $u_x(0,t)=0$),  then a finite-time
extinction still occurs (despite the fact that waves bounce at $x=0$) with an extinction time $T=2L/c$.  We refer to \cite{CZ} for the analysis of the finite-time 
extinction property for a nonhomogeneous string with a viscous damping at one extremity, to \cite{gugat} for the finite-time stabilization of a string with a moving boundary, 
to \cite{PR-IFAC} (resp. \cite{PR}) for the finite-time stabilization 
of a system of conservation laws on an interval (resp. on a tree-shaped network).

The finite-time stability of \eqref{Int1}-\eqref{Int4} is easily established when writing \eqref{Int1} as a system of two transport equations
\begin{eqnarray*}
&&d_t + cd_x =0,\label{Int11} \\
&&s_t-  cs_x =0.\label{Int12}
\end{eqnarray*} 
where $d:=u_t-cu_x$ and $s:=u_t+cu_x$ stand for the Riemann invariants for the wave equation written as a first order hyperbolic system.  
The boundary conditions \eqref{Int2} and  \eqref{Int3} yield $d(0,t)=s(L,t)=0$ (and hence $d(.,t)=s(.,t)=0$ for $t\ge L/c$), while the boundary conditions
\eqref{Int2} and $u(0,t)=0$ yield $s(L,t)=0$ and $d(0,t)= - s(0,t)$ (and hence $s(.,t)=0$ for $t\ge L/c$ and $d(.,t)=0$ for $t\ge 2L/c$).

The stabilization of networks of strings has been considered in e.g. \cite{AJ,AJK,DZbook,GDL,GS,VZ,XLL}. In \cite{GS}, the authors considered a star of vibrating strings,
and derived the finite time stability (resp. the exponential stability) when transparent boundary conditions are applied at all external nodes (resp. at all external nodes but one, which is changing as times proceeds). For a more general network, we guess that the finite time stability cannot hold without the 
introduction of additional feedback controls at the internal nodes. 
Indeed, it is proved here that for a bone-shaped tree, if the feedback controls are applied only at the external nodes, then the finite time stability fails. 

The aim of this paper is to investigate the {\em finite-time} stabilization of a {\em tree-shaped} network of strings. At each internal node $n$ connecting $k$ edges,
we  assume that the usual continuity condition hold
\be
\label{Int21}
u_i(n,t)=u_j(n,t), \qquad \forall i\ne j,
\ee 
while the usual Kirchhoff law is modified by incorporating a {\em damping term} inside:
\be
\label{Int22}
\sum_{i} c_iu_{i,x}(n,t) = -\alpha (n) u_t (n,t).
\ee 
In \eqref{Int22}, the sum is over the indices $i$ of the edges having $n$ as one end,  
$\alpha (n)  \in\R$ is a coefficient depending on the node $n$,  and 
we have set $u(n,t):=u_i(n,t)$ (for any $i$) and taken $n$ as the origin of each edge to define the derivative 
along the space variable.  The case $\alpha =0$ corresponds to the usual (conservative) Kirchhoff law. 

Note that we can assume without loss of generality that the length of each edge is one, by scaling the variable $x$ and the coefficient $c_i$ along each edge.

Even if the finite-time stabilization of  $2\times 2$ hyperbolic systems on tree-shaped networks  was already considered in \cite{PR} (and applied to the regulation of water flows 
in networks of canals, with $k-1$ controls at any node connecting $k$ canals), the novelty (and difficulty) here comes from the fact that only {\em one} control 
is applied at each internal node. The present work can be seen as a first step in the understanding of the finite-time stabilization of systems of conservation laws with a few controls. 
 
A natural guess is that the finite-time stability cannot hold if one can find in the tree a pair of adjacent nodes that are free of any control, because of the (partial but standing)
bounces of waves at these nodes.  This conjecture will be demonstrated here for a star-shaped tree and a bone-shaped tree.

Actually, we shall prove that the finite-time stabilization can be achieved for a very particular choice of the coefficient $\alpha$ at each internal node. 
One of the main results proved in this paper is the following
\begin{thm}
\label{thm-intro}
Consider any tree-shaped network of strings, with transparent boundary conditions at the external nodes,  continuity conditions and the modified Kirchhoff law
at the internal nodes. If at each internal node $n$ connecting $k$ edges we have $\alpha (n)=k-2$, then each solution of the wave equation on the network 
becomes constant after some finite time. 
\end{thm}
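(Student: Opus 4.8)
The plan is to rewrite the wave equation on each edge as a pair of transport equations for the Riemann invariants and to track how these invariants are transmitted through the nodes. On an edge parametrized by $x\in[0,1]$ with speed $c$, set $d:=u_t-cu_x$ and $s:=u_t+cu_x$, so that $d_t+cd_x=0$ and $s_t-cs_x=0$; thus $d$ travels toward $x=1$ and $s$ toward $x=0$, each at speed $c$. First I would record the \emph{node transmission laws}. At an external node the transparent boundary condition says exactly that the invariant leaving the node vanishes identically. At an internal node $n$ joining $k$ edges, parametrizing each incident edge with origin at $n$, denote by $S_i(t)$ the invariant entering $n$ from edge $i$ and by $D_i(t)$ the one leaving $n$ along edge $i$. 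Differentiating \eqref{Int21} in $t$ gives $d_i(0,t)+s_i(0,t)=2u_t(n,t)$ for every $i$, and \eqref{Int22} gives $\sum_i(s_i(0,t)-d_i(0,t))=-2\alpha(n)u_t(n,t)$. Eliminating $u_t(n,t)$ and using $\alpha(n)=k-2$ (so that $k-\alpha(n)=2$, which is precisely what makes the reflection coefficient vanish) yields the crucial identities
\[
u_t(n,t)=\tfrac12\sum_i S_i(t),\qquad D_i(t)=\sum_{j\ne i}S_j(t);
\]
in particular the wave arriving from edge $i$ is transmitted to all the other edges but produces \emph{no reflection back into edge $i$}.

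Second, I would use the method of characteristics on each edge: the invariant traveling in a given direction along an edge of speed $c$ equals, at the far end at time $t$, its value at the near end at time $t-1/c$, as soon as $t\ge 1/c$ (for smaller $t$ it is prescribed by the initial data). For an oriented edge $\vec e$ let $w_{\vec e}(t)$ be the invariant arriving at the head of $\vec e$ along $e$. Combining the characteristic relation with the node laws gives, for $t\ge 1/c_e$,
\[
w_{\vec e}(t)=\sum_{\vec f\to\vec e}w_{\vec f}(t-1/c_e),
\]
where $\vec f\to\vec e$ means that the head of $\vec f$ is the tail of $\vec e$ and $f\ne e$ (a non‑backtracking step); the sum is empty exactly when the tail of $\vec e$ is an external node, and then $w_{\vec e}(t)=0$ for $t\ge 1/c_e$.

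Third — the conceptual heart — I would exploit the tree structure. A non‑backtracking closed walk in a tree is impossible (it would contain a cycle), so the directed graph on oriented edges with the arcs $\vec f\to\vec e$ is acyclic, and its sources are exactly the oriented edges pointing away from an external node. Expanding the recursion for $w_{\vec e}$ along this DAG (equivalently, induction along a topological order), after finitely many steps every term is traced back to a source; the accumulated delays are sums of $1/c_e$ along a simple path of the tree, hence bounded by $T_0:=\max\{\sum_{e\in P}1/c_e:\ P\ \text{a simple path in the tree}\}$. Therefore all the $w_{\vec e}$ vanish for $t\ge T_0$, so by characteristics $d\equiv s\equiv 0$ on every edge for $t\ge T_0$, i.e. $u_t\equiv u_x\equiv 0$ there; continuity \eqref{Int21} at the internal nodes together with connectedness of the tree then forces $u$ to equal a single constant for $t\ge T_0$.

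I expect the main obstacle to lie not in these algebraic and combinatorial steps but in carrying them out at the available regularity: for data in $H^1\times L^2$ the invariants are only $L^2$, the transport equations and the node identities hold in a weak/trace sense, and one must justify the characteristic representation and give meaning to $u_t(n,t)$ and $u_{i,x}(n,t)$ at the nodes. Granting the well‑posedness framework set up earlier, this is routine; alternatively one can first establish the result for smooth compatible data and then pass to the general case by density and continuous dependence.
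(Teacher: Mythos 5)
Your proof is correct and is essentially the paper's own argument: the zero-reflection law $D_i=\sum_{j\ne i}S_j$ at a node with $\alpha=k-2$ is exactly Lemma \ref{lem1} (equation \eqref{cle} together with \eqref{S3}--\eqref{S5}), and your induction along the acyclic non-backtracking oriented-edge graph is a symmetric repackaging of the paper's induction on the depth of the tree (Lemma \ref{lem2}) combined with the root-exchange step used to annihilate the $d_i$'s in part (iii) of Theorem \ref{thm2}. Your extinction time $T_0$ (the longest weighted simple path) coincides with the paper's $T(\mathcal T)$.
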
 
Similar results will be obtained when replacing at one given external node the transparent boundary condition by the homogeneous Dirichlet (resp. Neumann) boundary condition.
We shall also see that the condition about $\alpha$ is {\em sharp} for a star-shaped tree by explicit computation of the discrete spectrum. The same approach gives for
a bone-shaped tree a necessary and sufficient condition for the finite time stability, which differs slightly from those stated in Theorem \ref{thm-intro}.

The paper is outlined as follows. In Section 2, we provide a sharp condition on the coefficients $\alpha (n)$ for the system to be well-posed. It is obtained
by expressing the conditions \eqref{Int21}-\eqref{Int22}  at the internal nodes in terms of the Riemann invariants. In Section 3, we prove the finite-time stability results
when the coefficients $\alpha$ are chosen as in  Theorem \ref{thm-intro}. We discuss in  
Section 4 the necessity of that condition by considering  tree-shaped networks and bone-shaped networks. 

\section{Well-posedness}
We introduce some notations inspired by  \cite{DZcras}. Let $\mathcal T$ be a tree, whose {\em vertices} (or {\em nodes})
are numbered by the index $n\in {\mathcal N}  =\{ 0,...,N\}$, and whose {\em edges} are numbered by the index $i\in {\mathcal I} =\{ 1,...,N\}$. We choose 
a simple vertex (i.e. an external node), called the {\em root}  of $\mathcal T$ and denoted by $\mathcal R$, and which corresponds to the index $n=0$.
The edge containing $\mathcal R$ has $i=1$ as index, and its other endpoint has for index $n=1$.  
We choose an orientation of the edges in the tree such that $\mathcal R$ is the ``first'' encountered vertex.  The {\em depth} $d$ of the tree is the number of 
generations ($d=1$ for a tree reduced to a single edge, $d=2$ for a star-shaped tree, etc.) 
Once the orientation of the tree is chosen, each point of the $i$-th edge (of length 1) is identified with
a real number $x\in [0,1]$. The points  $x=0$ and $x=1$ are termed  the {\em initial point} and the {\em final point} of the $i$-th edge, respectively. 
Renumbering the edges if needed, we can assume that the edge of index $i$ has as final point the vertex with the (same) index $n=i$ for all $i\in {\mathcal I}$.  
(See Figure \ref{fig1}.)
\begin{figure}[http]
\begin{center}
\includegraphics[scale=0.5]{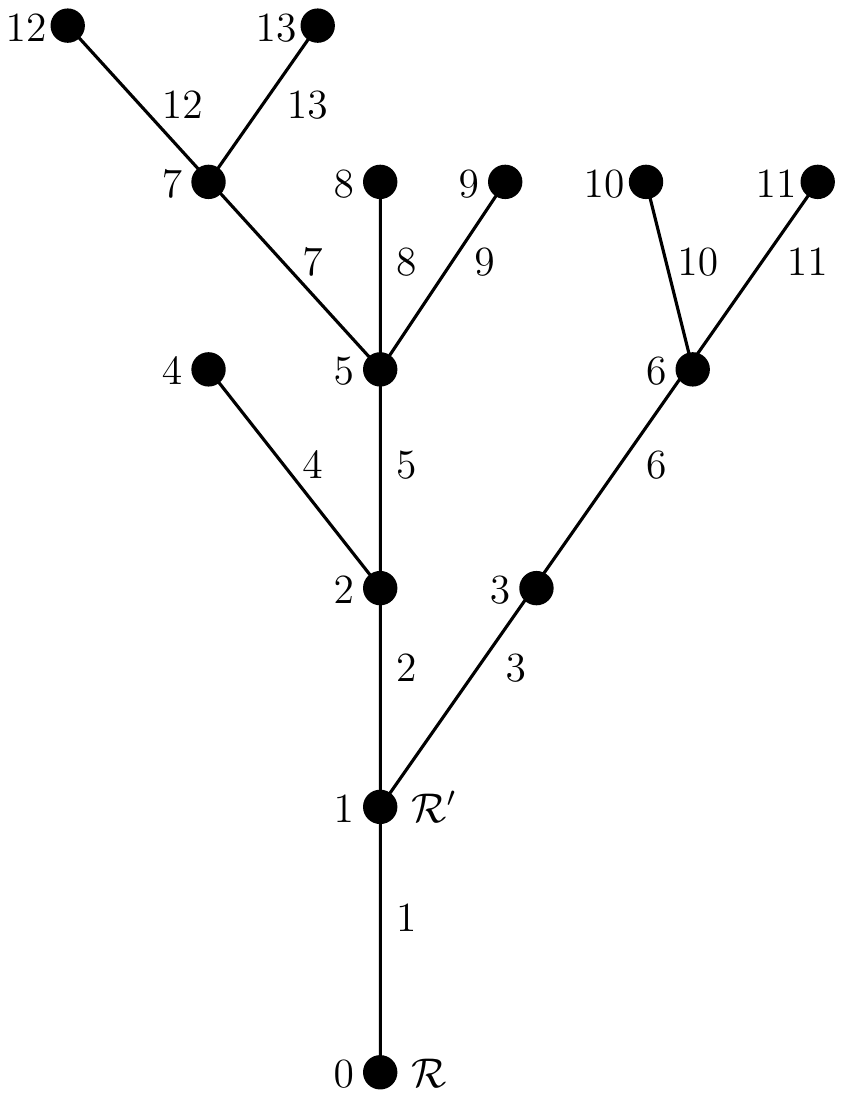}
\end{center}
\caption{ A tree with $14$ nodes, a depth equal to $5$, with simple nodes $\mathcal{N}_S=\{0,4,8,9,10,11,12,13\}$ and multiple nodes $\mathcal{N}_M=\{ 1 , 2 , 3, 5 , 6, 7 \}$.}
\label{fig1}
\end{figure}
The set of  indices of simple and multiple nodes are denoted by ${\mathcal N}_S$ and ${\mathcal N}_M$, respectively.

For $n\in {\mathcal N}_M$ we denote by ${\mathcal I}_n$ the set of indices of those edges having the vertex of index $n$ as initial point.
As we consider a network of strings whose constants $c_i$ 
may vary from one edge to another one, the case $\# ({\mathcal I}_n)=1$ (one child) is possible. The number of edges having the vertex of index $n$ 
as one as their extremities is 
\[
k_n:= \# ({\mathcal I}_n )  + 1\ge 2.
\]
We consider the following system
\ba
u_{i,tt} - c_i^2 u_{i,xx} &=& 0, \qquad t>0, \ 0<x<1, \   i\in {\mathcal I} \label{A1} \\
(u_i(.,0),u_{i,t}(.,0)) &=& (u_{i}^0,u_{i}^1), \qquad i\in {\mathcal I} \label{A2}
\ea
with the following boundary conditions
\ba
c_nu_{n,x}(1,t) &=& -u_{n,t}(1,t), \qquad t>0, \ n\in {\mathcal N} _S\setminus \{ 0\} , \label{A3} \\
\sum_{i\in {\mathcal I}_n } c_i u_{i,x}(0,t) - c_n u_{n,x} (1,t) &=& -\alpha _n u_{n,t}  (1,t) ,\qquad   t>0, \ n\in {\mathcal N}_M, \label{A4}\\
u_i(0,t) &=& u_{n} (1,t),\qquad t>0,\  n\in {\mathcal N}_M, \ i\in {\mathcal I}_n, \label{A5}    
\ea
where the sequence $(\alpha _n)_{n\in {\mathcal N}_M}$ is still to be defined. For the boundary condition at the root $\mathcal R$, we 
shall consider one of the following conditions
\ba
\label{A6a}
u_1(0,t) &=& 0, \qquad t>0 \qquad \text{\rm (Dirichlet boundary condition)};\\
\label{A6b}
u_{1,x}(0,t)&=& 0, \qquad t>0 \qquad \text{\rm (Neumann boundary condition)}; \\
\label{A6c}
c_1u_{1,x}(0,t) &=& u_{1,t}(0,t), \qquad t>0 \qquad \text{\rm (Transparent boundary condition).}
\ea
Let 
\[
{\mathcal H} = \big\{ (u_i,v_i)_{i\in \mathcal I} \in \prod_{i\in \mathcal I} [H^1(0,1)\times L^2(0,1)];\ 
u_i(0)=u_n(1) \ \forall n\in {\mathcal N}_M, \ \forall i\in {\mathcal I}_n   
\big\} 
\]
and ${\mathcal H}_0 =\big\{ (u_i , v_i)_{i\in \mathcal I} \in {\mathcal H} ;\ u_1 (0)=0\}$.

Replacing $u_{i,t}$ by $v_i$ and dropping the variable $t$, conditions \eqref{A3} - \eqref{A6c} may be rewritten respectively as
\ba
c_nu_{n,x}(1)&=&-v_n(1), \qquad  n\in {\mathcal N} _S\setminus \{ 0\} , \label{AA3} \\
\sum_{i\in {\mathcal I}_n } c_i u_{i,x}(0) - c_n u_{n,x} (1) &=& -\alpha _n v_n(1) ,\qquad  n\in {\mathcal N}_M, \label{AA4}\\
u_i(0) &=& u_n (1),\qquad  n\in {\mathcal N}_M, \ i\in {\mathcal I}_n, \label{AA5}   \\ 
u_1(0)&=& 0, \label{AA6a}\\
u_{1,x}(0)&=&0, \label{AA6b}\\
c_1u_{1,x}(0)&=&v_1(0). \label{AA6c}
\ea
If $t\in \R ^+ \to (u_i,v_i)_{i\in \mathcal I}\in {\mathcal D}(A_T)$ is continuous, using $v_i=u_{i,t}$, \eqref{AA5} and \eqref{AA6a} we obtain  
\ba
v_i(0) &=& v_n (1),\qquad  n\in {\mathcal N}_M, \ i\in {\mathcal I}_n, \label{AAA5}   \\ 
v_1(0)&=& 0. \label{AAA6a}
\ea
Introduce the operator $A_D$, $A_N$ and $A_T$ defined as
\begin{eqnarray*}
&&A_D((u_i,v_i)_{i\in \mathcal I})=(v_i,c_i^2u_{i,xx})_{i\in \mathcal I}, \\
&&A_N((u_i,v_i)_{i\in \mathcal I})=(v_i,c_i^2u_{i,xx})_{i\in \mathcal I}, \\
&&A_T((u_i,v_i)_{i\in \mathcal I})=(v_i,c_i^2u_{i,xx})_{i\in \mathcal I}, 
\end{eqnarray*}
with respective domains
\begin{eqnarray*}
&&{ \mathcal D} (A_D)=
\{ (u_i,v_i)_{i\in \mathcal I}\in  \prod_{i\in \mathcal I} [H^2(0,1)\times H^1(0,1)];\  \eqref{AA3}-\eqref{AA5},\  \eqref{AA6a} \textrm{ and } 
\eqref{AAA5}-\eqref{AAA6a} \textrm{ hold} \} \\
&&\qquad\qquad  \subset {\mathcal H}_0, \\
&&{ \mathcal D} (A_N)=
\{ (u_i,v_i)_{i\in \mathcal I}\in  \prod_{i\in \mathcal I} [H^2(0,1)\times H^1(0,1)];\  \eqref{AA3}-\eqref{AA5}, \ \eqref{AA6b}   \textrm{ and }\eqref{AAA5}
\textrm{ hold} \}\subset {\mathcal H}, \\
&&{ \mathcal D} (A_T)=
\{ (u_i,v_i)_{i\in \mathcal I}\in  \prod_{i\in \mathcal I} [H^2(0,1)\times H^1(0,1)];\  \eqref{AA3}-\eqref{AA5}, \ \eqref{AA6c}   \textrm{ and }\eqref{AAA5}
\textrm{ hold} \}\subset{\mathcal H}.
\end{eqnarray*}

The main result in this section is  concerned with the well-posedness of system \eqref{A1}-\eqref{A5} and \eqref{A6a} (or \eqref{A6b}, or \eqref{A6c}). 
\begin{thm}
\label{thm1} 
Let $\mathcal T$ be a tree and let $(\alpha _n)_{n\in {\mathcal N}_M}$ be a given family of real numbers. 
Then $A_T$ generates a strongly continuous semigroup of operators on $\mathcal H$ if, and only if, 
\be
\label{cond-WP}
\alpha _n \ne k_n \qquad \forall n\in {\mathcal N}_M.
\ee
The same conclusion holds for $A_N$ on $\mathcal H$ (resp. for $A_D$ on $\mathcal H_0$). 
\end{thm}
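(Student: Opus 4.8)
\emph{Proof plan.} Write $A$ for any one of $A_T,A_N,A_D$ and $X$ for the corresponding phase space ($\mathcal H$ or $\mathcal H_0$). The plan is to diagonalize the system via Riemann invariants and to reduce the question to the solvability of a small linear system at each internal node, the condition $\alpha_n\ne k_n$ being exactly what makes that system invertible. Concretely, on each edge $i$ set $d_i:=v_i-c_i\partial_x u_i$ and $s_i:=v_i+c_i\partial_x u_i$; along the dynamics $d_i,s_i$ solve the transport equations $\partial_t d_i+c_i\partial_x d_i=0$ and $\partial_t s_i-c_i\partial_x s_i=0$, the pair $(\partial_x u_i,v_i)$ is recovered from $(d_i,s_i)$, and $u_i$ itself from $(d_i,s_i)_i$ together with the single number $u_1(0)$ (integrating $\partial_x u_j$ along the edges and using the continuity \eqref{AA5}). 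Thus $X$ is isomorphic to $\R\times\prod_i (L^2(0,1))^2$ — the $\R$-factor being absent in the Dirichlet case, where $u_1(0)=0$ — and under this isomorphism $A$ becomes the transport operator $d_i\mapsto-c_i\partial_x d_i$, $s_i\mapsto c_i\partial_x s_i$, together with the scalar law $\tfrac{d}{dt}u_1(0)=v_1(0)$, its domain being cut out by the node conditions.

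The next step is to rewrite \eqref{AA3}--\eqref{AA6c} and \eqref{AAA5} in terms of the boundary traces of the $d_i,s_i$. One finds: at a leaf $n\in\mathcal N_S\setminus\{0\}$, $s_n(1)=0$; at the root, $d_1(0)=0$ (transparent), $d_1(0)=s_1(0)$ (Neumann), or $d_1(0)=-s_1(0)$ (Dirichlet); and at an internal node $n$ with $\mathcal I_n=\{i_1,\dots,i_m\}$, $k_n=m+1$, the velocity continuity $v_{i_j}(0)=v_n(1)$ reads $d_{i_j}(0)+s_{i_j}(0)=d_n(1)+s_n(1)$ while the modified Kirchhoff law \eqref{AA4} gives one further scalar relation; eliminating the $d_{i_j}(0)$ from these leaves
\[
(\alpha_n-k_n)\,s_n(1)=(k_n-2-\alpha_n)\,d_n(1)-2\sum_{j=1}^{m}s_{i_j}(0).
\]
Hence the \emph{outgoing} invariants $\big(s_n(1),(d_{i_j}(0))_j\big)$ at a node (resp. $d_1(0)$ at the root) are determined from the \emph{incoming} ones by a bounded linear ``reflection'' map — and at an internal node this holds \emph{if and only if} $\alpha_n\ne k_n$; when $\alpha_n=k_n$ the displayed identity degenerates to the compatibility relation $d_n(1)+\sum_j s_{i_j}(0)=0$, with $s_n(1)$ left undetermined.

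\emph{Sufficiency.} Assume $\alpha_n\ne k_n$ for every $n\in\mathcal N_M$. Then all the reflection maps above are well-defined bounded operators, and since each $c_i>0$ and every edge has length $1$, along any finite time interval a characteristic meets a node only finitely many times. The method of characteristics then yields a solution operator $S(t)$ on $X$: translate the invariants along characteristics, apply the reflection maps at the nodes, and update $u_1(0)$ by integrating $v_1(0)$. One checks directly that $(S(t))_{t\ge 0}$ is a semigroup with $\|S(t)\|\le Me^{\omega t}$ ($\omega$ controlled by $\max_i c_i$ and the norms of the reflection maps), that it is strongly continuous, and that $\frac{d}{dt}\big|_{t=0}S(t)U_0=AU_0$ for $U_0\in\mathcal D(A)$, so that $A$ generates $(S(t))_{t\ge 0}$. (Equivalently, for $\mathrm{Re}\,\lambda$ large one solves $(\lambda-A)U=F$ edge by edge as scalar ODEs closed up by the above relations; since the propagation factors $e^{-\lambda/c_i}$ tend to $0$ and $\alpha_n\ne k_n$, the resulting boundary system is uniformly invertible, yielding the Hille--Yosida estimate.)

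\emph{Necessity, and the main difficulty.} Suppose $\alpha_{n_0}=k_{n_0}$ for some $n_0\in\mathcal N_M$, and assume for contradiction that $A$ generates a $C_0$-semigroup $(S(t))_{t\ge 0}$. Combining \eqref{AAA5} with \eqref{AA4} at $n_0$ (and using $d_n(1)+s_n(1)=2v_n(1)$) shows that every $U\in\mathcal D(A)$ satisfies $d_{n_0}(1)+\sum_{i\in\mathcal I_{n_0}}s_i(0)=(k_{n_0}-\alpha_{n_0})v_{n_0}(1)=0$. For $U_0\in\mathcal D(A)$ the orbit $U(t)=S(t)U_0$ stays in $\mathcal D(A)$ and is a classical solution, so this relation holds along it for all $t\ge 0$; but on each edge the classical solution is given by d'Alembert's formula, hence $d_{n_0}$ and the $s_i$ ($i\in\mathcal I_{n_0}$) are constant along characteristics, and for $0<t<\min\big(c_{n_0}^{-1},\min_{i\in\mathcal I_{n_0}}c_i^{-1}\big)$ the relation becomes $d_{n_0}^0(1-c_{n_0}t)+\sum_{i\in\mathcal I_{n_0}}s_i^0(c_i t)=0$, a constraint on the restriction of the initial datum near $n_0$. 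Now choose $U_0$ with $v_i^0\equiv 0$ for all $i$, $u_i^0\equiv 0$ on every edge except edge $n_0$, and $u_{n_0}^0(x)=\int_0^x\rho$ with $\rho\in C^\infty_c(1-\varepsilon,1)$, $\int_0^1\rho=0$, and $\rho$ positive on a subinterval. Because $\rho$ and its antiderivative vanish at $0$ and $1$ and $v^0\equiv 0$, all of \eqref{AA3}--\eqref{AA6c} and \eqref{AAA5} hold, so $U_0\in\mathcal D(A)$; yet $s_i^0\equiv 0$ near $x=0$ for $i\in\mathcal I_{n_0}$ while $d_{n_0}^0=-c_{n_0}\rho$ does not vanish just to the left of $x=1$, so the displayed identity fails for small $t>0$ — a contradiction. (The cases $A_N$, $A_D$ are handled identically, using the corresponding root relation, and working in $\mathcal H_0$ in the Dirichlet case.) I expect this last step to be the delicate point: one must produce a datum in $\mathcal D(A)$ that satisfies the degenerate node relation at $n_0$ itself but violates its transported version an instant later; the only other point requiring (routine) care is the justification that a classical solution reduces to d'Alembert's formula along each edge, which legitimizes propagating the invariants along characteristics.
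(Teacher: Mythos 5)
Your proposal is correct, and its backbone --- passing to the Riemann invariants $d_i,s_i$ and observing that the node conditions determine the outgoing traces from the incoming ones precisely when $\alpha_n\ne k_n$ --- is the same as the paper's: your displayed node identity is equivalent to the paper's \eqref{S4}, and your step-by-step construction of $S(t)$ along characteristics is the explicit counterpart of the paper's contraction argument in the weighted space $L^2_{\rho^t dt}(0,T)^N$. The genuine difference is in the necessity direction, and there your route is the more solid one. The paper disposes of the case $\alpha_{n_0}=k_{n_0}$ by ``picking initial data in $\mathcal D(A_T)$ that does not satisfy \eqref{iccc}''; but since $\mathcal D(A_T)$ already encodes the velocity-continuity condition \eqref{AAA5}, the left-hand side of \eqref{iccc} equals $(k_{n_0}-\alpha_{n_0})v_{n_0}^0(1)$ and hence vanishes identically on the domain when $\alpha_{n_0}=k_{n_0}$: no such datum exists, so the pointwise-in-time compatibility condition carries no information. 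What actually obstructs generation is, as you say, that the degenerate relation $d_{n_0}(1,t)+\sum_{i\in\mathcal I_{n_0}}s_i(0,t)=0$ must persist for all $t>0$ along a classical orbit; transported back along characteristics it becomes a constraint on the initial datum in a whole neighbourhood of the node, and your explicit $U_0$ (zero velocity, $u^0$ supported near $x=1$ on edge $n_0$ with mean-zero slope) lies in $\mathcal D(A)$ yet violates it. I checked that this datum satisfies all of \eqref{AA3}--\eqref{AA6c} and \eqref{AAA5}, and that the regularity $d_i\in C^0([0,T];H^1)\cap C^1([0,T];L^2)$ of a classical orbit justifies propagating the invariants along characteristics. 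So: same sufficiency argument in different clothing, but a necessity argument that repairs a real gap in the paper's.
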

\begin{proof}
We sketch the proof only for $A_T$. We need a preliminary result about the Riemann invariants
around an internal node. Consider any internal node connecting edges whose indices range over $\{1,...,k\}$ (to simplify the notations).
Consider any solution of \eqref{A1} satisfying 
\ba
&&u_1(1,t)=u_2(0,t)=\cdots = u_k(0,t)\label{AB1}\\
&&c_2u_{2,x}(0,t)+\cdots + c_k u_{k,x}(0,t)-c_1 u_{1,x}(1,t) = -\alpha u_{1,t}(1,t)  \label{AB2}
\ea
Introduce the Riemann invariants 
\ba
d_i(x,t)&:=&u_{i,t}(x,t)-c_iu_{i,x}(x,t),\label{Ri1}\\
s_i(x,t)&:=&u_{i,t}(x,t)+c_iu_{i,x}(x,t)\label{Ri2}
\ea 
for all $i\in \mathcal I$. Then the following result holds.
\begin{lem} 
\label{lem1}
\begin{enumerate}
\item If $\alpha \ne k$, then $s_1(1,t),d_2(0,t),...,d_k(0,t)$ can be expressed in a unique way as functions of 
$d_1(1,t),s_2(0,t),...,s_k(0,t)$.  In particular, if $\alpha =k-2$, we obtain
\be
s_1(1,t)=\sum_{i=2}^k s_i(0,t). \label{cle}
\ee
\item If $\alpha = k$, then the existence of a solution to \eqref{A1} and \eqref{AB1}-\eqref{AB2}  implies 
\begin{equation}
\label{CC}
d_1(1,t) +  \sum_{i=2}^{k}s_i(0,t) =0.
\end{equation}
This imposes that the initial condition $(u_i^0,v_i^0)_{i\in \mathcal I }$ satisfies the compatibility condition  
\be
(1-\alpha ) v_1^0(1) + \sum_{i=2}^k v_i^0(0)=0.
\label{iccc}
\ee
\end{enumerate}
\end{lem} 
{\em Proof of Lemma \ref{lem1}.}  Using Riemann invariants, we see that
\eqref{A1} and \eqref{AB1}-\eqref{AB2} are transformed into 
\ba
&&d_{i,t} +c_id_{i,x}=0,\qquad i=1,...,k,\label{C2} \\
&&s_{i,t} - c_is_{i,x}=0,\qquad i=1,...,k, \label{C1} \\
&&s_1(1,t)+d_1(1,t) = s_2(0,t) + d_2(0,t) = \cdots = s_k(0,t) + d_k(0,t),\label{C3} \\
&&\sum_{i=2}^k [ s_i(0,t)-d_i(0,t)] - (s_1(1,t) - d_1(1,t)) = -\alpha (s_1(1,t) + d_1(1,t))  \label{C4}
\ea 
To simplify the notations, we write $s_1$ for $s_1(1,t)$, $s_2$ for $s_2(0,t)$, etc. Then \eqref{C3}-\eqref{C4} can be written
\ba
&&s_1+d_1= d_i+s_i, \qquad i=2,...,k,\label{S1}\\
&&(1-\alpha )s_1+d_2 + \cdots + d_k = (1+\alpha ) d_1 + s_2 + \cdots + s_k \label{S2}
\ea
We readily infer from \eqref{S1} that 
\ba
s_1-d_2&=&-d_1+s_2, \label{S3}\\
d_2-d_3&=& -s_2+s_3,\label{S3bis}\\
&\vdots& \nonumber \\
d_{k-1}-d_k &=& -s_{k-1}+s_k.\label{S3ter}
\ea
Adding the $k-1$ equations in \eqref{S1} results in 
\begin{equation*}\label{Totor1}
(k-1)s_1 - \sum_{i=2}^{k} d_i= (1-k)d_1 + \sum_{i=2}^k s_i
\end{equation*}
Subtracting this last equation from \eqref{S2}, we obtain
\begin{equation*}
2\sum_{i=2}^{k} d_i= (k+\alpha)d_1 + (k+\alpha -2) s_1=
2d_1 + (k+\alpha -2) (d_1+s_1)
\end{equation*}
Combined to the relation $d_1+s_1=d_k+s_k$, this yields
\begin{equation*}
\sum_{i=2}^{k} d_i= d_1 + (\frac{k+\alpha}{2} -1)(d_k+s_k).
\end{equation*}
Using this relation in \eqref{S2} together with the relation $s_1=d_k+s_k -d_1$, we obtain
\begin{equation}\label{S4}
(k-\alpha)d_k= 2d_1 + 2\sum_{i=2}^{k-1} s_i + (\alpha -k+2) s_k.
\end{equation}
Thus, if $\alpha \ne k$, we infer from \eqref{S3}-\eqref{S4} that 
$s_1(1,t),d_2(0,t),...,d_k(0,t)$ can be expressed in a unique way as functions of 
$d_1(1,t),s_2(0,t),..., s_k(0,t)$.  In particular, if $\alpha =k-2$, then \eqref{S4} becomes
\be
d_k=d_1+\sum_{i=2}^{k-1} s_i. \label{S5}
\ee
Adding \eqref{S3},\eqref{S3bis},...,\eqref{S3ter} and \eqref{S5} yields \eqref{cle}.
Finally, if $\alpha =k$, then \eqref{S5} reads
\[
d_1(1,t) + \sum_{i=2}^k s_i(0,t) = 0.
\] 
Letting $t=0$ yields \eqref{CC}. Replacing $s_i$ and $d_i$ by their expressions in terms of $u_i$ and $v_i$ and using \eqref{AA4}, we obtain
\eqref{iccc}.\qed

Let us proceed to the proof of Theorem \ref{thm1}.  If \eqref{cond-WP} is not satisfied, picking some initial data $(u_i^0,v_i^0)_{i\in\mathcal I} \in {\mathcal D} (A_T)$
that does not satisfies \eqref{iccc} around an internal node for which \eqref{cond-WP} fails, we infer from Lemma \ref{lem1} that system \eqref{A1}-\eqref{A5} and \eqref{A6c}
does not admit any solution  $(u_i,v_i)_{i\in\mathcal I} \in C(\R ^+;{\mathcal D} (A_T))$. This shows $A_T$  is not the generator of a continuous semigroup on 
$\mathcal H$. Conversely, assume that  \eqref{cond-WP} is satisfied. We aim to construct by a fixed-point procedure a solution to \eqref{A1}-\eqref{A5} and \eqref{A6c}. 
Pick any $U^0=(u_i^0,v_i^0)_{i\in \mathcal I}\in \mathcal H$ and any $T>0$. Set 
\[
d_i^0:=v_i^0-c_i u_{i,x}^0, \qquad s_i^0:=v_i^0+c_i u_{i,x}^0, \qquad i=1,...,N.
\]
Pick a number $\rho \in (0,1)$. 
We introduce the Hilbert space $\mathcal E = L^2_{\rho ^tdt}(0,T)^N$ endowed with the norm 
\[
||(x_1,x_2,....,x_N)||^2_{\mathcal E} = \sum_{i=1}^N \int_0^T |x_i(t)|^2 \rho ^t dt.
\]
$X(t)$ stands for the vector $(...,d_n(1,t),s_{n+1}(0,t),...,s_{n+k_n-1}(0,t),...)$ where $n$ ranges over ${\mathcal N}_M$.
Let 
\[
{\mathcal E}_0:=\{(x_1,...,x_N)\in {\mathcal E};\  \ x_n(t)=0\ \forall t\ge c_n^{-1},\  \forall n\in {\mathcal N}_S\}.
\]
We define a map $P:X=(x_1,...,x_N)\in {\mathcal E}_0\to \tilde X=(\tilde x_1,...,\tilde x_N)\in {\mathcal E}_0$ as follows. 
Pick any $n\in {\mathcal N}_M$. By Lemma \ref{lem1}, there exists a 
matrix $A_n\in \R ^{k_n\times k_n}$ such that the Riemann invariants associated with the solution of \eqref{A1}-\eqref{A5} and \eqref{A6c} satisfy
\[
\left(\begin{array}{c}
s_n(1,t)\\
d_{n+1}(0,t)\\
\vdots\\
d_{n+k_n-1}(0,t)
\end{array} \right) 
=A_n 
\left(\begin{array}{c}
d_n(1,t)\\
s_{n+1}(0,t)\\
\vdots\\
s_{n+k_n-1}(0,t)
\end{array} \right). 
\]
Then, we set 
\[
\left(\begin{array}{c}
s_n(1,t)\\
d_{n+1}(0,t)\\
\vdots\\
d_{n+k_n-1}(0,t)
\end{array} \right) 
:=A_n 
\left(\begin{array}{c}
x_n(t)\\
x_{n+1}(t)\\
\vdots\\
x_{n+k_n-1}(t)
\end{array} \right). 
\]
Next, solving \eqref{C2}-\eqref{C1}, we set 
\be
\label{K1}
s_n(x,t) = \left\{
\begin{array}{ll}
s_n^0(x+c_nt) \quad &\textrm{ if } 0<x+c_nt<1,\\
s_n(1,t+c_n^{-1}(x-1))\quad &\textrm{ if } x+c_nt>1, 
\end{array}
\right.
\ee
and for $k=n+1,...,n+k_n-1$  
\be
\label{K2}
d_k(x,t) = \left\{
\begin{array}{ll}
d_k^0(x-c_kt) \quad &\textrm{ if } 0<x-c_kt<1,\\
d_k(0,t-c_k^{-1}x)\quad &\textrm{ if } x-c_kt<0. 
\end{array}
\right.
\ee
Similarly, we set 
\be
\label{K3}
d_n(x,t) = \left\{
\begin{array}{ll}
d_n^0(x-c_nt) \quad &\textrm{ if } 0<x-c_nt<1,\\
x_n(t+c_n^{-1}(1-x))\quad &\textrm{ if } x-c_nt<0, 
\end{array}
\right.
\ee
and for $k=n+1,...,n+k_n-1$  
\be
\label{K4}
s_k(x,t) = \left\{
\begin{array}{ll}
s_k^0(x+c_kt) \quad &\textrm{ if } 0<x+c_kt<1,\\
x_k(t+c_k^{-1}x)\quad &\textrm{ if } x+c_kt>1. 
\end{array}
\right.
\ee
Finally, we set
\[
\left(\begin{array}{c}
\tilde x_n(t)\\
\tilde x_{n+1}(t)\\
\vdots\\
\tilde x_{n+k_n-1}(t)
\end{array} \right) 
:=
\left(\begin{array}{c}
s_n(0,t)\\
d_{n+1}(1,t)\\
\vdots\\
d_{n+k_n-1}(1,t)
\end{array} \right). 
\]
Then it can be seen that $P$ is a map from ${\mathcal E}_0$ into itself.  Let us check that it is a contraction for $\rho$ small enough. 
Let $X^1=(x_1^1,...,x_N^1)$ and $X^2=(x_1^2,...,x_N^2)$ be given in $\mathcal E_0$. In what follows, $c$ denotes a constant that may vary from line to line. 
Then we have 
\ba
||P(X_1) -P(X_2)||^2_{\mathcal E}
&\le& c  \sum_{i=1}^N \int_{c_i^{-1} }^T |x_i^1(t-c_i^{-1})-x_i^2(t-c_i^{-1})|^2\rho^tdt\\
&\le& c(\max_{i\in \mathcal I} \rho ^{c_i^{-1}}) ||X^1-X^2||^2_{\mathcal E}. \ea
This proves that $P$ is a contraction in ${\mathcal E}_0$ for $\rho>0$ small enough. It follows from the contraction principle that $P$ has a (unique)
fixed-point in $\mathcal E_0$.  It is then easy to check that the Riemann invariants $d_i,s_i$, $1\le i\le N$, defined along \eqref{K1}-\eqref{K4}, solve
\eqref{C2}-\eqref{C1} in the distributional sense and satisfy \eqref{C3}-\eqref{C4} almost everywhere.  Using again \eqref{K1}-\eqref{K4}, one has that 
for any $i\in \mathcal I$
\[
s_i(x,0)=s_i^0 (x), \quad d_i(x,0)=d_i^0 (x),  \ \textrm{ for a.e. } x\in [0,1].
\]
We can therefore define for all $i\in \mathcal I$ and all $T>0$  a function $u_i\in H^1((0,1)\times (0,T))$ by 
\[
u_{i,t}=\frac{1}{2}(s_i+d_i)=:v_i, \ \  u_{i,x}=\frac{1}{2c_i}(s_i-d_i), \
\]
the constant of integration being chosen so that 
\[
u_i(x,t)=u_i^0(x)+\int_0^t v_i(x,s) ds\quad \textrm{ for a.e.} \  (x,t)\in (0,1)\times (0,T). 
\] 
Then $(u_i,v_i)\in C(\R ^+,H^1(0,1)\times L^2(0,1))$, and \eqref{AA5} follows from \eqref{C3}. We infer that 
$(u_i,v_i)_{i\in \mathcal I}$  is a (weak) solution of \eqref{A1}-\eqref{A5} and \eqref{A6c} 
which is continuous in time with values in $\mathcal H$.
Set $S(t)U^0= (u_i(t),v_i(t))_{i\in \mathcal I}$. Then it can be seen 
that $\big( S(t) \big)_{t\ge 0}$  is a strongly continuous semigroup in $\mathcal H$ whose generator is 
$A_T$. The proof of Theorem \ref{thm1} is complete.  
\end{proof}

\section{Finite-time extinction}
Pick any tree of depth $d\ge 1$, and define the sequence $(t_i)_{i\in \mathcal I}$ as follows
\ba
t_i&=& c_i^{-1} \qquad \textrm{ if } i\in {\mathcal N}_S \setminus \{ 0\}, \label{T1}\\
t_i&=& c_i^{-1}+\max_{j\in {\mathcal I}_i} t_j  \qquad \textrm{ if } i\in {\mathcal N}_M. \label{T2} 
\ea
Set $T(\mathcal R) =t_1$. Then it is easily seen that $T(\mathcal R)$ is the maximum of the quantities
\[
c_{i_1}^{-1}+c_{i_2}^{-1}+\cdots + c_{i_p}^{-1}, 
\]
where $p\ge 1$, $i_1=1$, $i_{q+1}\in I_{i_q}$ for $1\le q\le p-1$, and the final point of the edge of index $i_q$ is an external node (different from $\mathcal R$).
Define  $T(\mathcal T)$ as the largest of the $T(\mathcal R)$'s when the root $\mathcal R$ ranges over ${\mathcal N}_S$; that is, we take as root of the tree any 
external node, change the numbering of the edges and nodes, and  define the corresponding sequences $({\mathcal  I_i})_{i \in \mathcal I}$ and $(t_i)_{i\in \mathcal I}$.
Obviously, $T(\mathcal R)\le T(\mathcal T)\le 2T(\mathcal R)$.
\begin{Exam}
Consider again the tree drawn in Figure \ref{fig1}, and assume for simplicity that $c_i=1$ for all $i\in [1,11]$. Then $T({\mathcal R})=5$ and $T({\mathcal T})=7$. Indeed, if 
we take the node of index $n=12$ as (new) root, we obtain $T({\mathcal R}_{n=12})=7$. Similarly, we see that $T({\mathcal R}_{n=13})=7$, 
$T({\mathcal R}_{n=8})=T({\mathcal R}_{n=9})=6$, $T({\mathcal R}_{n=4})=5$, and $T({\mathcal R}_{n=10})=T({\mathcal R}_{n=11})=7$. 
\end{Exam}
\begin{thm}
\label{thm2}
Let $\mathcal T$ be a tree of root $\mathcal R$, and let $T(\mathcal R)$ and $T(\mathcal T)$ be as above.
Assume that the sequence $(\alpha _n)_{n\in {\mathcal N}_M}$ satisfies the condition
\be
\label{cond-FTS}
\alpha _n = k_n-2\qquad n\in {\mathcal N} _M. 
\ee 
Pick any initial data $U_0=\{ (u_{i}^0,u_{i} ^1 \}_{i\in \mathcal I}\in {\mathcal H}$.
\begin{enumerate}
\item[(i)] If $U_0\in {\mathcal H}_0$, then the solution $(u_i)_{i\in \mathcal I} $ of \eqref{A1}-\eqref{A5} and \eqref{A6a} satisfies
\be
\label{B1} 
u_i(.,t) \equiv 0, \qquad \forall t\ge 2T(\mathcal R), \ \forall i\in {\mathcal I};
\ee
\item[(ii)] The solution $(u_i)_{i\in \mathcal I} $ of \eqref{A1}-\eqref{A5} and \eqref{A6b} satisfies for some number $C\in \R$ 
\be
\label{B2} 
u_i(.,t) \equiv C,\qquad  \forall t\ge 2T(\mathcal R), \ \forall i\in {\mathcal I}. 
\ee 
\item[(iii)] The solution $(u_i)_{i\in \mathcal I} $ of \eqref{A1}-\eqref{A5} and \eqref{A6c} satisfies for some number $C\in \R$ 
\be
\label{B3} 
u_i(.,t) \equiv C, \qquad \forall t\ge T(\mathcal T), \ \forall i\in {\mathcal I}. 
\ee 
\end{enumerate} 
\end{thm}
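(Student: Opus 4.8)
The plan is to work entirely in terms of the Riemann invariants $d_i, s_i$ defined in \eqref{Ri1}-\eqref{Ri2}, and to track how the ``information'' carried by these invariants propagates through the tree. Recall that $d_i$ is constant along characteristics $x - c_i t = \text{const}$ (it travels in the direction of increasing $x$, i.e.\ ``down'' the edge from initial point $0$ to final point $1$), while $s_i$ travels in the direction of decreasing $x$ (``up'' the edge, from $1$ to $0$). The key structural fact is the relation \eqref{cle} from Lemma \ref{lem1}: under the choice $\alpha_n = k_n - 2$, at each internal node $n$ the outgoing-up invariant $s_n(1,t)$ on the parent edge equals the sum $\sum_{i \in \mathcal I_n} s_i(0,t)$ of the incoming-up invariants from the children. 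In particular, the $s$-waves propagate \emph{unreflected} from the leaves toward the root, and the $d$-waves, by the first part of Lemma \ref{lem1}, are determined by the incoming data without feeding back into the $s$'s at the same node in a way that would create new $s$-energy after the initial data has been flushed.

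For part (iii) with the transparent condition \eqref{A6c} at the root, the argument I would give runs as follows. First, each $s_i$ is, for $t$ large, entirely determined by the value $s_n(1,\cdot)$ one generation up via the transport formula \eqref{K4}/\eqref{K1}, and by the Lemma~1 relation. Tracing back, $s_i(x,t)$ for $t$ larger than the sum of the travel times $c_{i_1}^{-1} + \cdots + c_{i_p}^{-1}$ along the path from a leaf down to the edge $i$ is a sum of values of $s_{i_q}$'s that, once $t$ exceeds $T(\mathcal R)$, have all been ``emptied'': at a leaf $m \in \mathcal N_S\setminus\{0\}$, the boundary condition \eqref{A3} forces $d_m(1,t) = 0$ for $t \ge c_m^{-1}$ (a right-moving wave hitting the transparent end leaves no reflection), and then the initial $s_m^0$ data has exited after time $c_m^{-1}$ as well; propagating this through \eqref{cle} node by node shows $s_i(\cdot, t) \equiv 0$ for $t \ge T(\mathcal R)$ on every edge. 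Symmetrically — and this is where one uses that $T(\mathcal T)$ is computed by allowing \emph{any} external node to be the root — the $d$-waves get flushed: the transparent condition at the genuine root kills $d_1$ feeding back, the $d$'s on all edges are determined by initial data plus the (already vanished) $s$'s at internal nodes, and after the longest leaf-to-leaf travel time $T(\mathcal T)$ one has $d_i(\cdot,t)\equiv 0$ too. Since $u_{i,t} = \tfrac12(s_i + d_i)$, we get $u_{i,t}(\cdot,t)\equiv 0$ for $t \ge T(\mathcal T)$, hence each $u_i$ is time-independent; continuity \eqref{AA5} across internal nodes and (in case (iii)) the lack of a Dirichlet constraint forces a common constant $C$, giving \eqref{B3}.

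For parts (i) and (ii) the only change is at the root edge. With the Dirichlet condition \eqref{AA6a}, a wave $d_1$ arriving at $x=0$ is reflected as $s_1(0,t) = -d_1(0,t)$, so instead of being absorbed it bounces once and travels back up; this doubles the relevant time, and one gets extinction for $t \ge 2T(\mathcal R)$ rather than $T(\mathcal T)$ — note $2T(\mathcal R) \ge T(\mathcal T)$, so this is consistent. The sign flip in the Dirichlet reflection, combined with $u_1(0,t)=0$, pins the constant to be $C = 0$, yielding \eqref{B1}. The Neumann case \eqref{AA6b} is identical except the reflection is $s_1(0,t) = d_1(0,t)$ (no sign change), so the constant need not vanish and we only obtain \eqref{B2}. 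In all three cases one should also note that once $s_i \equiv 0$ and $d_i \equiv 0$ the boundary/transmission relations are trivially satisfied, so there is no obstruction to the solution simply freezing.

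The main obstacle I anticipate is bookkeeping the propagation times correctly: one must verify that the recursive definitions \eqref{T1}-\eqref{T2} of $t_i$ and $T(\mathcal R)$ exactly capture the time needed for \emph{all} $s$-information to reach the root, and that, after reflection at the root, the time $2T(\mathcal R)$ (or, when exploiting the freedom to re-root, $T(\mathcal T)$) exactly captures the time for all $d$-information to be flushed out of every edge. A clean way to organize this is an induction on the depth $d$ of the tree (for the $s$-part) together with the observation that $T(\mathcal T)$ is by definition the max over all choices of root of $T(\mathcal R)$, which is precisely the longest path between two leaves measured in travel time; the technical care is in handling the ``turnaround'' at the reflecting root and at each internal node where an $s$-wave from one subtree can become a $d$-wave going into a sibling subtree via the first part of Lemma~\ref{lem1}, so that the worst case is genuinely a leaf-to-leaf path, not merely a leaf-to-root path.
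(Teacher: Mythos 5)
Your proposal is correct and follows essentially the same route as the paper: the node relation \eqref{cle} from Lemma \ref{lem1} flushes the $s$-invariants from the leaves toward the root in time $T(\mathcal R)$ (this is the paper's Lemma \ref{lem2}, proved by induction on the depth), after which the root condition reflects them into $d$-invariants that are flushed by time $2T(\mathcal R)$ in cases (i)--(ii), while in case (iii) one re-roots the tree at each external node and reapplies the same lemma to annihilate the $d$'s by time $T(\mathcal T)$. One small slip worth fixing: at a leaf, the transparent condition \eqref{A3} forces $s_m(1,t)=0$ for all $t\ge 0$, not $d_m(1,t)=0$ for $t\ge c_m^{-1}$ (the $d$-wave travels toward $x=1$ and exits unreflected precisely because no constraint is imposed on it there); your stated conclusion that $s_m\equiv 0$ on the leaf edge after time $c_m^{-1}$ is nonetheless the correct one, so the argument stands.
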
 
\begin{Rk} It is likely that the extinction time $T_e$ (i.e. the least time after which solutions remain constant) is given by   $2T(\mathcal R)$ in the cases (i) and (ii), and
$T(\mathcal T)$ in case (iii), so that the above results are sharp.
Actually, for one string, it is well known that $T_e=2/c_1$ for the solutions of \eqref{A1}-\eqref{A5} and \eqref{A6a}
(or  for the solutions of \eqref{A1}-\eqref{A5} and \eqref{A6b}), while $T_e=1/c_1$ for the solutions of \eqref{A1}-\eqref{A5} and \eqref{A6c}.
\end{Rk}
\begin{proof}
We use again the Riemann invariants $d_i,s_i$ defined in \eqref{Ri1}-\eqref{Ri2} that satisfy the transport equations \eqref{C2}-\eqref{C1}.  
We need the following
\begin{lem}
\label{lem2}
Let $\mathcal T$ be a tree, and let the sequence $(t_i)_{i\in\mathcal I}$ be as in \eqref{T1}-\eqref{T2}.
Assume that the sequence $(\alpha _n)_{n\in {\mathcal N}_M}$ satisfies \eqref{cond-FTS}. Then for any 
$U_0\in {\mathcal H}$ and any solution  $(u_i)_{i\in \mathcal I}$ of \eqref{A1}-\eqref{A5}, with corresponding Riemann invariants $d_i,s_i$, we have for all $i\in\cI$
\be
s_i(x,t) =0\quad \forall x\in [0,1],\ \forall t\ge t_i.\label{A21}
\ee 
\end{lem}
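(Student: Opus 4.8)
The plan is to prove \eqref{A21} by induction on the tree, working from the leaves towards the root, combining the explicit propagation of $s_i$ along the characteristics of \eqref{C1} with the transmission relation \eqref{cle} at the internal nodes; the latter is available precisely because hypothesis \eqref{cond-FTS} gives $\alpha_n=k_n-2$, so that the conclusion \eqref{cle} of Lemma \ref{lem1} holds at each internal node. Note that the root boundary condition \eqref{A6a}--\eqref{A6c} will play no role here: the $s_i$--characteristics travel towards the root and away from the leaves, so the whole mechanism is driven by the transparent conditions \eqref{AA3} at the external nodes. Moreover, since the argument will only use relations valid for \emph{every} solution of \eqref{A1}--\eqref{A5}, the statement will hold for any such solution.

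First I would record the characteristic formula. Since $s_i$ solves $s_{i,t}-c_is_{i,x}=0$, its values propagate towards the initial point $x=0$ of the $i$-th edge, so that
\[
s_i(x,t)=s_i\big(1,\,t-c_i^{-1}(1-x)\big)\qquad\text{for every } t\ge c_i^{-1}(1-x),
\]
the boundary trace $s_i(1,\cdot)$ being well defined from the construction \eqref{K1}--\eqref{K4}. Because $1-x\le1$ on $[0,1]$ and $t_i\ge c_i^{-1}$, it follows that in order to obtain \eqref{A21} for the edge $i$ it is enough to show that $s_i(1,t)=0$ for all $t\ge t_i-c_i^{-1}$.

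For the base case, let $i=n\in {\mathcal N}_S\setminus\{0\}$ be a leaf edge: the transparent boundary condition \eqref{AA3} gives $s_n(1,t)=v_n(1,t)+c_nu_{n,x}(1,t)=0$ for all $t>0$, and since $t_n=c_n^{-1}$ this is exactly what is needed. For the inductive step, take $n\in {\mathcal N}_M$ and assume \eqref{A21} holds for every child edge $j\in {\mathcal I}_n$. The internal node of index $n$ joins the edge $n$ (whose final point is the node) and the edges $j\in {\mathcal I}_n$ (whose initial point is the node), i.e.\ $k_n$ edges in all, and $\alpha_n=k_n-2$; hence \eqref{cle} reads $s_n(1,t)=\sum_{j\in {\mathcal I}_n}s_j(0,t)$. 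Evaluating the inductive hypothesis at $x=0$ yields $s_j(0,t)=0$ for $t\ge t_j$, whence $s_n(1,t)=0$ for $t\ge\max_{j\in {\mathcal I}_n}t_j=t_n-c_n^{-1}$ by \eqref{T2}. This is the required bound for the edge $n$, so the induction proceeds; since the tree is finite, every edge is reached and \eqref{A21} holds for all $i\in\mathcal I$.

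I do not expect a genuine obstacle; the one point needing care is that the Riemann invariants $s_i$ are merely $L^2$ in the space variable, so the traces $s_i(0,\cdot)$, $s_i(1,\cdot)$, the a.e.\ validity of the characteristic formula and of \eqref{cle}, and the evaluations at $x=0,1$ above must be understood in the weak sense furnished by the proof of Theorem \ref{thm1} --- where exactly these traces occur as the coordinates $x_i$ and as the outputs of the matrices $A_n$. With that reading, the above steps are all routine.
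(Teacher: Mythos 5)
Your proposal is correct and follows essentially the same route as the paper: the base case is the transparent condition \eqref{A3} at the external nodes combined with the characteristic formula for $s_{i,t}-c_is_{i,x}=0$, and the inductive step is the transmission relation \eqref{cle}, which holds at each internal node precisely because $\alpha_n=k_n-2$. The only (cosmetic) difference is that you organize the induction edge-by-edge from the leaves toward the root, whereas the paper inducts on the depth $d$ of the tree by applying the induction hypothesis to the subtrees rooted at the first internal node; the arithmetic $\max_{j\in\mathcal I_n}t_j=t_n-c_n^{-1}$ and the resulting extinction times are identical.
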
 
\noindent
{\em Proof of Lemma \ref{lem2}.}  We argue by induction on the depth $d$ of the tree. If $d=1$, then there is only one edge ($\cI =\{1\}$)  and $s_1$ solves
\ba
s_{1,t}- c_1s_{1,x}&=& 0, \quad t>0,\ 0<x<1,  \label{A23}\\
s_1 (1,t) &=& 0, \quad t>0, \label{A24}\\ 
s_1(.,0) &=& s_1^0:=v_{1}^0+ c_1  u_{1,x}^0. \label{A25}
\ea 
Then it is easily seen that 
\be
s_{1}(x,t) =
\left\{
\begin{array}{ll}
s_1^0(x+c_1t)\quad &\textrm{if }x+c_1t\le 1,\\
0\quad & \textrm {if } x+c_1t\ge 1.
\end{array}
\right.
\ee
Thus
\[
s_1(x,t)=0\qquad \forall x\in[0,1],\ \forall t\ge c_1^{-1}
\]
and \eqref{A21} is established for $d=1$.

Assume now Lemma \ref{lem1} established for any tree of depth at most $d-1$, where $d\ge 2$. Pick a tree $\mathcal T$
of depth $d$, and a sequence $(\alpha_n)_{n\in {\mathcal N}_M}$ satisfying $(\mathcal P)$.  Denote by ${\mathcal R}'$ the node of index $n=1$, and 
 by ${\mathcal T}_i$, for $i=2,...,k_1$, the subtree of $\mathcal T$ of root ${\mathcal R}'$ and of first edge the edge of $\mathcal T$ of index $i$. 
 Since $\cT _i$ is of depth at most $d-1$, we infer from the induction hypothesis that for $i >1$
 \be
 s_i(x,t) =0 \qquad \forall x\in [0,1],\ \forall t\ge t_i. \  \label{A26}
 \ee 
It remains to prove \eqref{A21} for $i=1$.  Since the condition \eqref{cond-FTS} is satisfied for $n=1$, we infer from \eqref{cle} that
\[
s_1(1,t)=\sum_{i=2}^{k_1}s_i(0,t), \qquad \forall t\ge 0.
\]
It follows then from \eqref{A26} that 
\[
s_1(1,t)=0\qquad \forall t\ge \max _{i\in \mathcal I _1}t_i.
\]
Finally, using \eqref{A23}, we infer that 
\[
s_1(x,t)=0\qquad \forall x\in [0,1], \ \forall t\ge c_1^{-1} + \max _{i\in \mathcal I _1}t_i=t_1.
\]
The proof of Lemma \ref{lem2} is complete. \qed

Let us go back to the proof of Theorem \ref{thm2}.

(i) Assume first that $U_0 \in \cH _0$, and let 
$(u_i)_{i\in \cI}$  denote the solution of \eqref{A1}-\eqref{A5} and \eqref{A6a}.  From Lemma \ref{lem2}, we have that 
for all $i\in \mathcal I$ 
\be
\label{G1}
s_i(x,t) =0 \qquad \forall x\in [0,1],\ \forall t\ge T({\mathcal R}).
\ee
From \eqref{A6a}, we infer that $d_1(0,t)+s_1(0,t)=0$ for all $t\ge 0$, and hence
\[
d_1(0,t)=0,\qquad \forall t\ge T({\mathcal R}).
\]
Using \eqref{C2}, we infer that 
\[
d_1(x,t)=0,\qquad \forall x\in [0,1],\ \forall t\ge c_1^{-1}+ T({\mathcal R}).
\]
Combined with \eqref{S3}-\eqref{S3ter} (with $k=k_1$) and \eqref{G1}, this yields
\[
d_2(0,t)=\cdots =d_{k_1}(0,t)=0, \qquad \forall t\ge c_1^{-1} + \max_{i\in \mathcal I_1}c_i^{-1} +T(\mathcal R).
\] 
Using the second definition of $T(\mathcal R)$ and proceeding inductively, we arrive to 
\be
\label{G2}
d_i(x,t) =0 \qquad \forall i\in {\mathcal I}, \ \forall x\in [0,1],\ \forall t\ge 2T({\mathcal R}).
\ee
Gathering together \eqref{G1} and \eqref{G2}, we infer the existence of some constant $C\in\R$ such that
\[
u_i(x,t)=C,\qquad  \forall i\in {\mathcal I}, \ \forall x\in [0,1],\ \forall t\ge 2T({\mathcal R}).
\]
Using \eqref{A6a}, we see that $C=0$. This proves that solutions of  \eqref{A1}-\eqref{A5} and \eqref{A6a} are null for $t\ge T(\mathcal R)$. 
Combined with the strong continuity of the semigroup $(e^{tA_D})_{t\ge 0}$ in ${\mathcal H}_0$, this yields the finite time stability. 

(ii) Assume now that $u_0\in \cH $ and let $(u_i)_{i\in \cI}$ denote the solution of \eqref{A1}-\eqref{A5} and \eqref{A6b}. 
From \eqref{A6a}, we infer that $d_1(0,t)-s_1(0,t)=0$ for all $t\ge 0$. The same proof as in (i) then yields
\[
s_i(x,t)=d_i(x,t)=0, \qquad \forall i\in {\mathcal I}, \ \forall x\in [0,1],\ \forall t\ge 2T({\mathcal R}).
\]
Thus there exists a constant $C\in \R$ such that 
\[
u_i(x,t)=C,\qquad  \forall i\in {\mathcal I}, \ \forall x\in [0,1],\ \forall t\ge 2T({\mathcal R}).
\]

(iii) Pick a solution $(u_i)_{i\in \cI}$  of \eqref{A1}-\eqref{A5} and \eqref{A6c}. Then it follows from Lemma \ref{lem2} that 
for all $i\in \mathcal I$ 
\be
\label{G3}
s_i(x,t) =0 \qquad \forall x\in [0,1],\ \forall t\ge T({\mathcal R}).
\ee
For any given $i\in \cI$, we pick a sequence $i_1<i_2<\cdots < i_p$ such that $i_1=1$, $i=i_q$ for some $q\in [1,p]$, and the final point of the edge of index $i_p$ is
an external point, that we call $\tilde{\mathcal R}$. If we exchange $\mathcal R$ and $\tilde{\mathcal R}$, we notice that $d_i$ is linked to the $\tilde s_j$'s (associated
with the new root $\tilde{\mathcal R}$) by:
\[
d_i(x,t)=\tilde s_{i_p-i+1}(1-x,t).
\] 
We infer that 
\be
\label{G4}
d_i(x,t) =0 \qquad \forall x\in [0,1],\ \forall t\ge T({\mathcal T}).
\ee
Therefore, there exists a constant $C\in \R$ such that 
\[
u_i(x,t)=C,\qquad  \forall i\in {\mathcal I}, \ \forall x\in [0,1],\ \forall t\ge T({\mathcal T}).
\]
The proof of Theorem \ref{thm2} is complete.
\end{proof}
\section{Sharpness of the condition \eqref{cond-FTS}}
The condition \eqref{cond-FTS}, which is sufficient to yield the finite-time stability, is expected to be also necessary. A way to prove it is to search for
an eigenvalue of the underlying operator. Indeed, if we can find an eigenvalue,  then the corresponding exponential solution will not steer 0 in  finite time.
This program can be achieved  when the geometry is sufficiently simple, namely when $d=2,3$. Actually, we will  consider any value of  the sequence of 
coefficients  $(\alpha _n)_{n\in {\mathcal N}_M}$, and exhibit an eigenvalue
of the underlying operator when \eqref{cond-WP} holds and \eqref{cond-FTS} fails.
We shall consider 
\begin{enumerate}
\item a star-shaped tree, with the homogeneous Dirichlet boundary condition at one external node and the transparent boundary conditions at the other external nodes; 
\item a tree with two internal nodes, for which a transparent boundary condition is applied at each external node. 
\end{enumerate}
\subsection{The star-shaped tree}
Assume that $\cT$ is a star-shaped tree with $N$ edges ($d=2$, $k_1=N$), and consider the boundary conditions \eqref{A3}-\eqref{A5} and \eqref{A6a}.
(See figure \ref{fig2}.)
\begin{figure}[http]
\begin{center}
\includegraphics[scale=0.5]{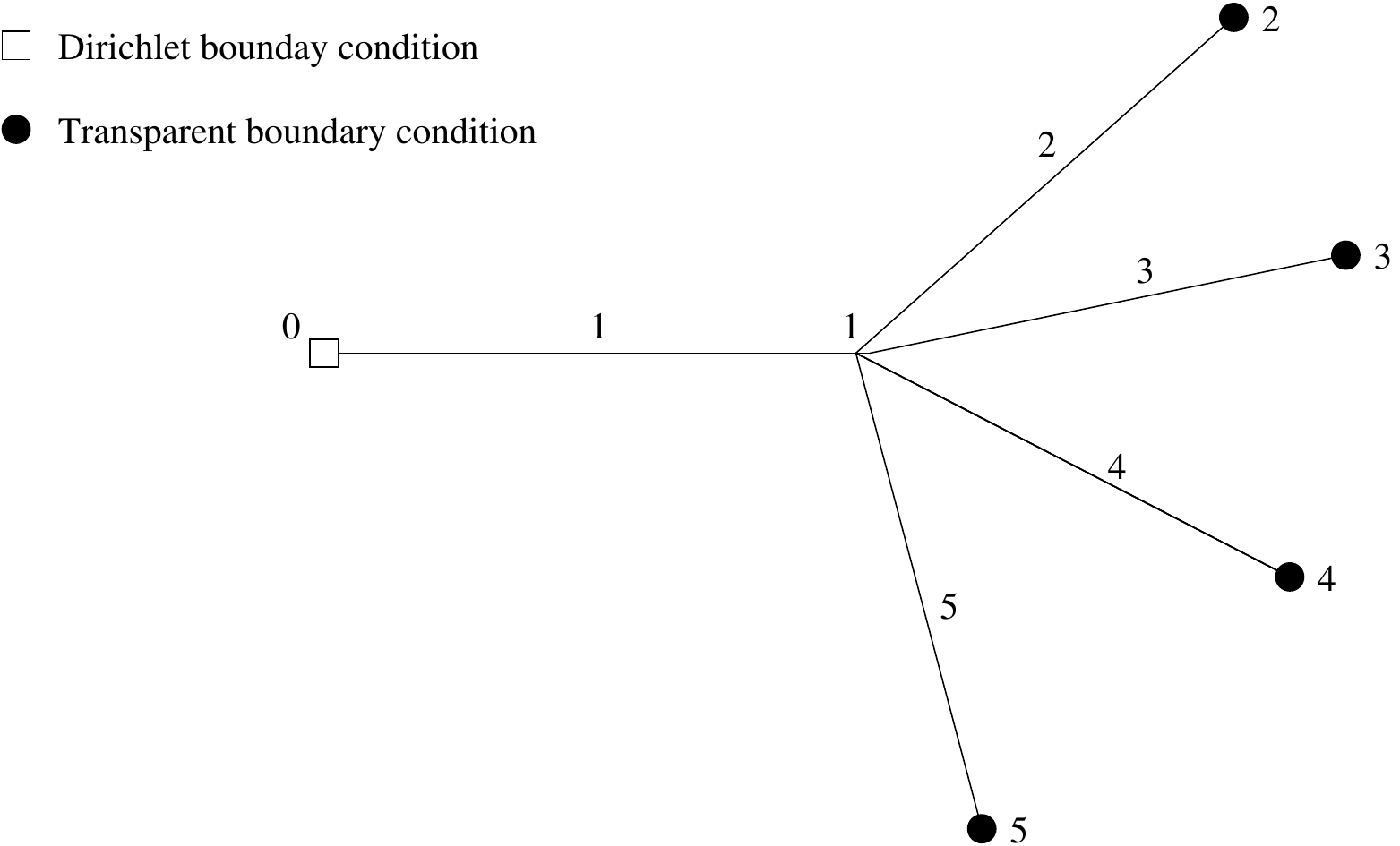}
\end{center}
\caption{A star-shaped tree.}
\label{fig2}
\end{figure}
 
We assume that $\alpha _1\ne N$, so that the system   \eqref{A1}-\eqref{A5} and \eqref{A6a} is well-posed in ${\mathcal H}_0$ according to Theorem \ref{thm1}. 
According to Theorem \ref{thm2}, there is a finite-time stabilization when $\alpha _1=N-2$. We shall show that this condition is sharp, i.e. that
a finite-time stabilization cannot hold if $\alpha _1\not\in \{N-2,N\}$.

Let $\alpha _1\in \R$ be given. The operator $A_D$ reads
\[
A_D\big( (u_i,v_i)_{i\in \cI}\big) = (v_i,c_i^2u_i'')_{i\in \cI}
\] 
with
\begin{multline*}
D(A_D) = \{ (u_i,v_i)_{i\in\cI} \in \cH _0;\ (v_i,c_i^2u_i'')_{i\in \cI} \in \cH _0, \ c_iu_i'(1)=-v_i(1)\text{ for } 2\le i\le N\ \\
\sum_{2\le i\le N} c_iu_i'(0) - c_1u_1'(1) = -\alpha _1 v_1(1), \ \text{ and } \  (u_i(0),v_i(0))=(u_1(1),v_1(1)) \ \text{ for } 2\le i\le  N\},
\end{multline*}
where $' =d/dx,\  ''=d^2/dx^2,$ etc.
Setting $U:=(u_i,v_i)_{i\in \cI}$, we see that \eqref{A1}-\eqref{A5} and \eqref{A6a} may be written as
\ba
U_t&=&A_DU \label{W1}\\
U(0)&=&U_0=(u_i^0,u_i^1)_{i\in \cI} \label{W2}
\ea
If $A_DU_0=\lambda U_0$ with $U_0\ne 0$, then the solution 
$U$ of \eqref{W1}-\eqref{W2} reads $U(t)=e^{\lambda t} U_0$  (exponential solution), and hence $||U(t)|| _\cH =e^{(\text{Re}\,  \lambda )t } ||U_0||_\cH >0$ for all
$t\ge 0$.  Thus if the operator $A_D$ has {\em at least} one eigenvalue, then the finite-time stabilization cannot hold.   
\begin{prop}
\label{prop1}
Let $\cT$ denote a star-shaped tree with $N$ edges, and assume that $\alpha _1\ne N$. Then 
the operator $A_D$ has at least one eigenvalue if, and only if, 
\be
\label{F1}
\alpha _1\ne N-2.
\ee
Furthermore, if \eqref{F1} holds, then the discrete spectrum of $A_D$ is $\sigma _d(A_D)=\{\lambda _k;\ k\in \Z\}$ where
\be
\label{eigenvalue}
\lambda _k = \frac{c_1}{2} \log_{-\frac{\pi}{2}} \frac{N-2-\alpha_1}{N-\alpha _1}  + ic_1k\pi
\ee 
and $\log_{     -\frac{\pi}{2}   }$ denotes the usual determination of the logarithm in $\C \setminus i\R ^-$. In particular, if \eqref{F1} holds, then 
the finite-time stabilization of  \eqref{A1}-\eqref{A5} and \eqref{A6a} in ${\mathcal H}_0$ fails. 
\end{prop}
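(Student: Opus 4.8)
The plan is to compute the point spectrum of $A_D$ directly by solving the eigenvalue equation $A_D U = \lambda U$ for $U = (u_i,v_i)_{i\in\mathcal{I}}\neq 0$. The definition of $A_D$ forces $v_i = \lambda u_i$ and $c_i^2 u_i'' = \lambda^2 u_i$ on each edge. I would first dispose of $\lambda = 0$: then each $u_i$ is affine and $v_i\equiv 0$, and inserting this into \eqref{AA3}, \eqref{AA5}, \eqref{AA4} and \eqref{AA6a} successively yields $u_i\equiv 0$ for all $i$, so $0$ is not an eigenvalue in $\mathcal{H}_0$.

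Assume henceforth $\lambda\neq 0$, and write $u_i(x) = A_ie^{\lambda x/c_i}+B_ie^{-\lambda x/c_i}$. The idea is to feed in the conditions defining $D(A_D)$ one at a time. The Dirichlet condition \eqref{AA6a} gives $u_1(x) = 2A_1\sinh(\lambda x/c_1)$; the transparent conditions \eqref{AA3} at the external nodes $i=2,\dots,N$, i.e. $c_iu_i'(1)+\lambda u_i(1) = 0$, force $A_i = 0$, so $u_i(x) = B_ie^{-\lambda x/c_i}$; and continuity \eqref{AA5} then gives $B_i = u_1(1) = 2A_1\sinh(\lambda/c_1)$ for $i\geq 2$. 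Since $A_1 = 0$ would make $U$ vanish, we have $A_1\neq 0$; substituting into the modified Kirchhoff law \eqref{AA4} and dividing by $2A_1\lambda$ collapses everything to the single scalar equation
\[
\coth(\lambda/c_1) = \alpha_1 - N + 1
\]
(the case $\sinh(\lambda/c_1) = 0$ is impossible, since it would force $\cosh(\lambda/c_1) = 0$ as well).

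It then remains to solve this transcendental equation, which I would do by setting $w = e^{2\lambda/c_1}$: the equation becomes $\frac{w+1}{w-1} = \alpha_1 - N + 1$, with unique solution $w = \frac{N-2-\alpha_1}{N-\alpha_1}$. This is a well-defined nonzero number precisely when $\alpha_1\neq N$ (the standing well-posedness hypothesis) and $\alpha_1\neq N-2$; if instead $\alpha_1 = N-2$ then $w = 0$, which is not in the range of $\lambda\mapsto e^{2\lambda/c_1}$, so $A_D$ has no eigenvalue. This gives the equivalence \eqref{F1}. When \eqref{F1} holds, the solutions of $e^{2\lambda/c_1} = w$ are exactly $\lambda = \frac{c_1}{2}\log w + ic_1 k\pi$, $k\in\mathbb{Z}$, which is \eqref{eigenvalue} (the chosen branch of $\log$ is immaterial, since we translate by all of $ic_1\pi\mathbb{Z}$); the associated eigenfunctions are built from exponentials, hence smooth, and satisfy the transmission conditions for the $v$-component automatically since $v=\lambda u$, so they lie in $D(A_D)$. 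The last assertion is then immediate from the remark preceding the Proposition: an eigenvalue $\lambda$ yields the exponential solution $U(t) = e^{\lambda t}U_0$ with $\|U(t)\|_{\mathcal{H}} = e^{(\mathrm{Re}\,\lambda)t}\|U_0\|_{\mathcal{H}} > 0$ for all $t\geq 0$, which never becomes constant after a finite time (that would force $\lambda = 0$), so finite-time stabilization of \eqref{A1}-\eqref{A5} and \eqref{A6a} fails in $\mathcal{H}_0$.

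I expect the only real difficulty to be bookkeeping: transcribing each of the transmission and boundary conditions in $D(A_D)$ correctly, and verifying that every degenerate sub-case ($\lambda = 0$, $A_1 = 0$, $\sinh(\lambda/c_1) = 0$, $\alpha_1 = N$) is ruled out, so that the chain of implications is actually a chain of equivalences and the list $\{\lambda_k : k\in\mathbb{Z}\}$ is exhaustive. The one genuinely computational point is the reduction of \eqref{AA4} to $\coth(\lambda/c_1) = \alpha_1 - N + 1$ and the subsequent explicit solution.
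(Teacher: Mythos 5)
Your proof is correct and follows essentially the same route as the paper: solve the eigenvalue ODE explicitly on each edge, eliminate the coefficients via the Dirichlet, transparent, continuity and Kirchhoff conditions, and reduce to the single scalar equation $e^{2\lambda/c_1}=\frac{N-2-\alpha_1}{N-\alpha_1}$ (your $\coth(\lambda/c_1)=\alpha_1-N+1$ is an equivalent rewriting). If anything, your treatment of the degenerate case $\lambda=0$ via affine functions is slightly more careful than the paper's, whose exponential ansatz degenerates there.
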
  
\begin{Rk} \label{Rk2}
1.Note that 
\[
\log_{     -\frac{\pi}{2}   } (z) =\left\{ 
\begin{array}{ll}
\log |z| \quad&\textrm{ if } z\in (0,+\infty), \\
\log |z| + i\pi \quad &\textrm{ if } z\in (-\infty , 0).
\end{array} 
\right.
\]
2. If we replace the Dirichlet boundary condition $u_1(0,t)=0$ by the transparent boundary condition
$u_{1,t}(0,t)= c_1u_{1,x}(0,t)$ and take {\em any} value $\alpha_1\ne N$, then since $d_1(0,t)=s_2(1,t)=\cdots =s_N(1,t)=0$ for all $t\ge 0$, 
we infer from \eqref{S3}-\eqref{S3ter} and \eqref{S4} that  $s_1(1,t)=d_2(0,t)=\cdots =d_N(0,t)=0$ for all $t\ge \max_{1\le i\le N}c_i^{-1}$, so that for some 
constant $C\in R$
\[
u_i(x,t)=C,\qquad \forall i\in [1,N],\ \forall x\in [0,1],\ \forall t\ge 2\max_{1\le i\le N}c_i^{-1}.
\]
\end{Rk}
\begin{proof}
Let $\lambda \in \C$ and $U=(u_i,v_i)_{i\in \cI} \in D(A_D)$. Then the equation $A_DU=\lambda U$ is equivalent to the following system
\ba
&&(v_i,c_i^2u_i'')=\lambda (u_i,v_i), \qquad 1\le i\le N,  \label{Z1}\\
&&u_1(0)=0, \label{Z2} \\
&&c_iu_i'(1) = -v_i(1),\qquad  2\le i\le N, \label{Z3} \\
&&\sum_{2\le i \le N} c_iu_i'(0) - c_1 u_1'(1) = -\alpha _1 v_1(1), \label{Z4}\\
&&u_i(0)=u_1(1), \qquad 2\le i\le N. \label{Z5}
\ea
Note that the conditions $v_1(0)=0$ and $v_i(0)=v_1(1) $ for $2\le i\le N$ are satisfied whenever \eqref{Z1}-\eqref{Z2} and \eqref{Z5} hold.
\eqref{Z1} is easily solved as
\be
u_i(x) = a_i e^{\lambda x/c_i} + b_i e^{-\lambda x/c_i},\ v_i(x)=\lambda u_i(x), \  1\le i\le N, \label{Z6} 
\ee
where $a_i,b_i\in \C$ are constants to be determined. Substituting the above expression of $u_i(x)$ in \eqref{Z2}-\eqref{Z5}  yields the system
\ba
&&a_1+b_1=0, \label{Z7} \\
&&\lambda a_i=0, \qquad 2\le i\le N, \label{Z8}\\
&&\lambda \sum_{2\le i\le N} (a_i-b_i) -\lambda (a_1 e^{\lambda /c_1} -b_1 e^{-\lambda /c_1} ) = -\alpha _1 \lambda
(a_1 e^{\lambda /c_1} + b_1 e^{-\lambda /c_1} ), \qquad \label{Z9}\\
&&a_i+b_i = a_1 e^{\lambda /c_1} +b_1 e^{-\lambda /c_1}, \qquad 2\le i\le N. \label{Z10} 
\ea
If $\lambda =0$, we infer from \eqref{Z6}-\eqref{Z7} and \eqref{Z10} that  $U=0$, which is excluded. Assume from now on  that $\lambda \ne 0$. Then the system 
\eqref{Z7}-\eqref{Z10} is found to be equivalent to the system 
\ba
&&b_1=-a_1, \label{Z11} \\
&& a_i=0, \qquad 2\le i\le N, \label{Z12}\\
&&-(N-1) a_1 (e^{\lambda /c_1}  -e^{-\lambda /c_1} ) 
-a_1 (e^{\lambda /c_1}  +e^{-\lambda /c_1} ) = -\alpha _1 
a_1 (e^{\lambda /c_1} - e^{-\lambda /c_1} ),\qquad  \label{Z13}\\
&&b_i = a_1 (e^{\lambda /c_1} -e^{-\lambda /c_1}), \qquad 2\le i\le N. \label{Z14} 
\ea
The existence of a nontrivial solution ($a_1\ne 0$) holds if, and only if, the coefficient above $a_1$ in \eqref{Z13} vanishes, i.e.
\be
(-N+\alpha _1) e^{\lambda /c_1} + (N-2 -\alpha _1 ) e^{-\lambda /c_1} =0. \label{Z20}
\ee
For $\alpha _1\ne N$, \eqref{Z20} is equivalent to
\[
e^{\frac{2\lambda}{c_1}} =\frac{N-2-\alpha _1}{N-\alpha _1} \cdot \label{Z3000}
\] 
\eqref{Z3000} has a solution $\lambda \in \C$ if and only if $\alpha _1\ne N-2$, and in that case the solutions of \eqref{Z3000} read 
\be
\label{ZZZ}
\lambda _k = \frac{c_1}{2} \log_{-\frac{\pi}{2}} \frac{N-2-\alpha_1}{N-\alpha _1} + ic_1k\pi, \quad k\in \Z.
\ee
\end{proof}
\begin{Rk} \label{Rk20}
For $k\in \Z$ and $\lambda _k$ as in \eqref{ZZZ}, we introduce the sequence of eigenfunctions $U_k=((u_{i,k},v_{i,k}))_{ 1 \le i\le N, k\in \Z}$ where 
\begin{eqnarray*}
u_{1,k} (x)  &=& e^{\lambda _k x/c_1} - e^{-\lambda _kx/c_1}, \quad v_{1,k} (x) =\lambda _k u_{1,k}(x), \\  
u_{i,k} (x)  &=& (e^{\lambda _k /c_1} - e^{-\lambda _k/c_1}) e^{-\lambda _k x/c_i} , \quad v_{i,k} (x) =\lambda _k u_{i,k}(x), \quad\text{ for }  2\le i\le N.
\end{eqnarray*}
Then the family $ (a_kU_k )_{k\in \Z}$ may fail to be a Riesz basis in $\mathcal H_0$ for any choice of the sequence of numbers  $(a_k)_{k\in \Z}$. Consider e.g.
$N=2$ and $c_2 = c_1/2$.  Then, for $N-2<\alpha _1 <N$, 
$$  u_{2,k} (x)  = (e^{\lambda _k /c_1} - e^{-\lambda _k/c_1}) e^{ - \log \vert \frac{N-2-\alpha _1}{N-\alpha _1} \vert x-i\pi x } e^{- i 2 k \pi x }. $$
Let $U=(u_i,v_i)_{i=1,2}\in {\mathcal H_0}$ be given. If $( a_kU_k )_{k\in \Z}$ is a Riesz basis in $\mathcal H_0$, then 
$U$ can be expended in terms of the $U_k$'s in $\mathcal H_0$ as
\[
(u_i,v_i)=\sum_{k\in \Z} d_ka_k (u_{i,k},v_{i,k}), \quad i=1,2
\]
for some sequence $(d_k)_{k\in \Z} \in L^2(\Z )$. Writing 
\[
e^{  \log \vert \frac{N-2-\alpha _1}{N-\alpha _1} \vert x  + i\pi x }u_2(x) =  \sum_{k\in \Z} c_k e^{-i2k\pi x}
\] 
we have, by harmonicity, that 
\[
c_k=d_ka_k (e^{\lambda _k/c_1} -e^{-\lambda _k/c_1} ), \quad k\in \Z ,
\]
and hence 
\[
u_1(x) = \sum_{k\in \Z} \frac{c_k}{e^{\lambda _k/c_1} - e^{-\lambda _k/c_1}} (e^{\lambda _k x/c_1} - e^{ - \lambda _k x/c_1}) 
\]
in $L^2(0,1)$. Therefore, $u_1$ is uniquely determined by the $c_k$'s, and hence by $u_2$, which is a 
property much stronger than the conditions $u_1(0)=0$ and $u_1(1)=u_2(0)$ present
in the definition of $\mathcal H_0$. This shows that the family $(a_kU_k)_{k\in \Z}$ is not  total in $\mathcal H_0$.  

It is natural to conjecture a decay of all the trajectories like 
\be
||U(t)||_{\mathcal H_0} \le C (\alpha _1) e^{\frac{c_1}{2} \log \left\vert \frac{N-2-\alpha _1}{N-\alpha _1}\right\vert t} ||U(0) ||_{\mathcal H_0}, \qquad t\ge 0, 
\label{decay}
\ee
for $N-2<\alpha _1<N$. (Note that $\lim_{\alpha _1 \searrow N-2} \log \vert \frac{N-2-\alpha _1}{N-\alpha _1} \vert  = - \infty $.) Without a Riesz basis of eigenvectors
 in the full space $\mathcal H_0$,
the validity of \eqref{decay} seems hard to check. 
\end{Rk}

\subsection{The tree with two internal nodes}
We   assume now that $\cT$ is a tree with $N+1$ nodes, two of which being multiple  ($d=3$, $\cN _M = \{ 1, 2\}$, $k_1\ge 2$, $k_2\ge 2$, $k_1+k_2=N+1$),
and we consider the boundary conditions \eqref{A3}-\eqref{A5} and \eqref{A6c}. (See Figure \ref{fig3}.)
\begin{figure}[http]
\begin{center}
\includegraphics[scale=0.5]{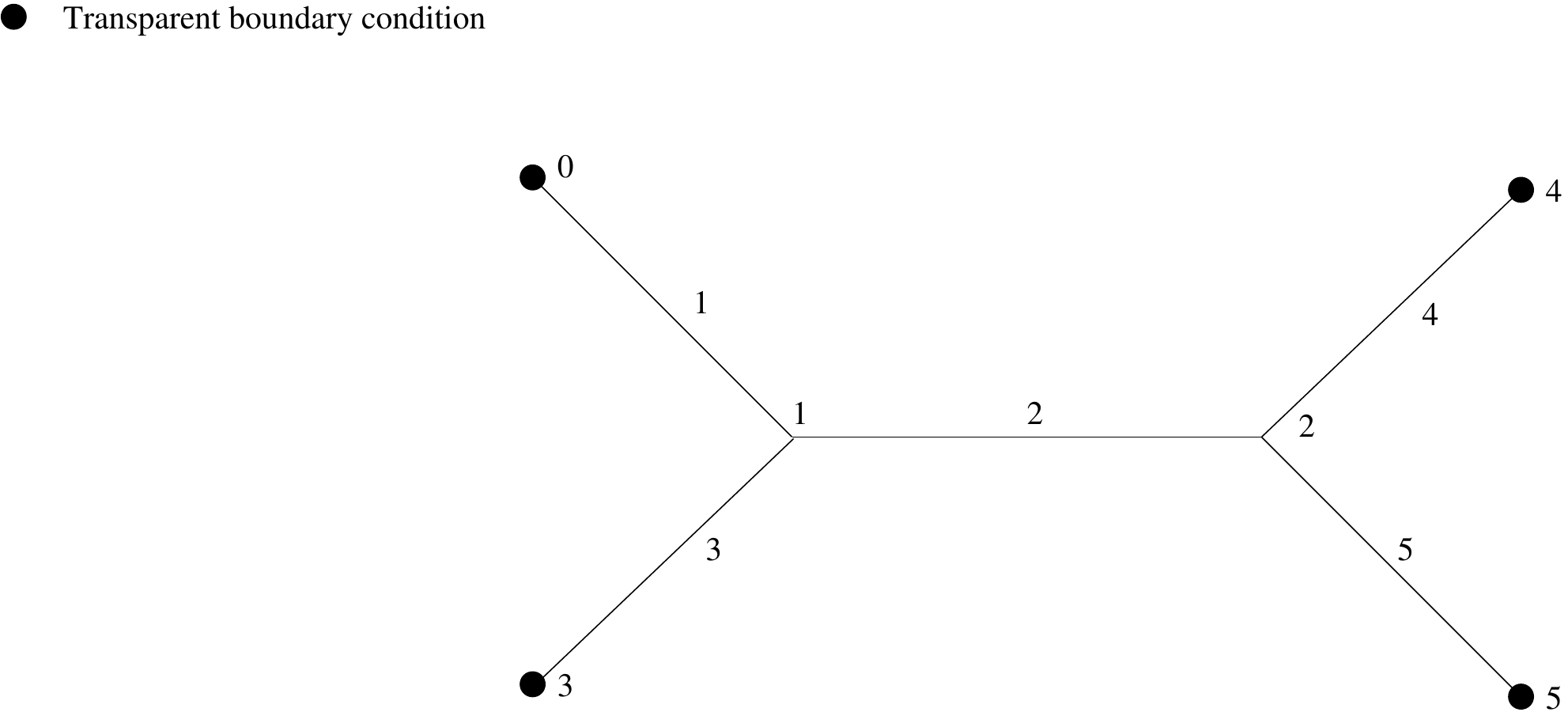}
\end{center}
\caption{A bone-shaped tree.}
\label{fig3}
\end{figure}
We will let $\alpha_1$ and $\alpha_2$ range over $\R$, assuming only that 
\eqref{cond-WP} holds. In particular,
when $\alpha_1=\alpha_2=0$, there is no damping at the internal nodes $n=1,2$. We shall show that the finite-time stabilization cannot hold in that case, 
because of the (partial but continuous) bounces of waves at the internal nodes. Note that for this geometry, condition \eqref{cond-FTS} reads 
\be
\label{AA1}
\alpha _1 =k_1-2,\quad \alpha _2=k_2-2.
\ee
Here, we shall show that there is an eigenvalue  (so that the finite-time stability fails) if, and only if,  both $\alpha _1 \ne k_1-2$ and $\alpha _2\ne k_2-2$. Notice
that this condition is stronger than $(\alpha _1,\alpha _2)\ne (k_1-2, k_2-2)$. We shall prove that, when
\be
(\alpha _1,\alpha_2)\in \{ k_1-2 \} \times (\R \setminus \{ k_2 \}) \cup  (\R \setminus \{ k_1 \}) \times  \{ k_2-2 \} , \label{cond-FTS-new}
\ee
then the finite-time stability (to constant functions) occurs. We conclude that, when $d=3$ and transparent boundary conditions
are imposed at all the external nodes, a necessary and sufficient condition for the finite-stability (to constant functions) is \eqref{cond-FTS-new}. 
The interpretation is that the nodes satisfying \eqref{cond-FTS} and for which all the adjacent nodes but one are external, are ``transparent'' and can be  ``removed''
from the tree.

Let $(\alpha _1,\alpha _2)\in \R ^2$ be given. The operator $A_T$ reads then
\[
A_T\big( (u_i,v_i)_{i\in \cI}\big) = (v_i,c_i^2u_i'')_{i\in \cI}
\] 
with domain
\begin{eqnarray*}
&&D(A_T) = \{ (u_i,v_i)_{i\in\cI} \in \cH ;\ (u_i,v_i)_{i\in \cI} \in \prod_{i\in \cI}[H^2(0,1)\times H^1(0,1)] ,\\
&&\qquad \quad  c_1u_1'(0) =v_1(0),  \   c_iu_i'(1)=-v_i(1)\text{ for } i\in \{ 3,...,N \}  \\
&&\qquad\quad  \sum_{2\le i\le k_1} c_iu_i'(0) - c_1u_1'(1) = -\alpha _1 v_1(1), \!\!\!\! \sum_{k_1+1 \le i\le N} c_iu_i'(0) - c_2u_2'(1) = -\alpha _2 v_2(1),  \\
&&\qquad \quad  (u_i(0),v_i(0))=(u_1(1),v_1(1)) \ \text{ for } 2\le i\le  k_1, \\
&&\qquad \quad  (u_i(0),v_i(0))=(u_2(1),v_2(1)) \ \text{ for } k_1+1\le i\le  N\}.
\end{eqnarray*}
Setting $U=(u_i,v_i)_{i\in \cI}$, we see that \eqref{A1}-\eqref{A5} and \eqref{A6c} may be written as
\ba
U_t&=&A_TU, \label{WW1}\\
U(0)&=&U_0=(u_{i}^0,u_{i}^1)_{i\in \cI}. \label{WW2}
\ea
\begin{prop}
\label{prop2}
Let $\cT$ denote a tree with $N$ edges and two internal nodes ($\cN _M=\{1,2\}$), and assume that 
\be
\alpha _1\ne k_1  \textrm{ and }   \alpha_2 \ne k_2.\label{LL1}
\ee
Then  the operator $A_T$ has at least one eigenvalue if, and only if, 
\be
\alpha _1\ne k_1-2  \textrm{ and }    \alpha _2\ne k_2-2. \label{LL2}
\ee
Furthermore, if \eqref{LL2} holds, then the discrete spectrum of $A_T$ is $\sigma _d(A_T)=\{\lambda _k;\ k\in \Z\}$ where
\be
\label{eigenvalue2}
\lambda _k = \frac{c_2}{2} \log_{-\frac{\pi}{2}} \frac{(2+\alpha_1-k_1)(2+\alpha_2-k_2)}{(\alpha _1-k_1)(\alpha_2 -k_2)}  +ic_2k\pi .
\ee 
In particular, the finite-time stability to constant functions does not hold for \eqref{A1}-\eqref{A5} and \eqref{A6c}. 
Finally, if \eqref{cond-FTS-new} is satisfied, then the finite-time stability to constant functions holds. 
\end{prop}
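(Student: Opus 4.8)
The plan is to follow the pattern of Proposition \ref{prop1}: first determine the point spectrum of $A_T$ by separation of variables, then read off the obstruction to finite-time stability, and finally prove the converse (sufficiency of \eqref{cond-FTS-new}) via the Riemann invariants and Lemma \ref{lem1}.

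For the spectral part, I would look for $\lambda\in\C$ and $U=(u_i,v_i)_{i\in\cI}\in D(A_T)\setminus\{0\}$ with $A_TU=\lambda U$, which forces $v_i=\lambda u_i$ and $c_i^2u_i''=\lambda^2u_i$. The value $\lambda=0$ yields exactly the constant states $(C,\dots,C;0,\dots,0)$, for which $U(t)\equiv U_0$ is already constant and hence harmless; so I focus on $\lambda\ne0$ and write $u_i(x)=a_ie^{\lambda x/c_i}+b_ie^{-\lambda x/c_i}$, $v_i=\lambda u_i$. Substituting into the boundary and transmission relations, I would eliminate the coefficients in order: the transparent condition at the root gives $b_1=0$; the absorbing conditions at the external nodes $3,\dots,N$ give $a_i=0$ there; the continuity and modified Kirchhoff relations at node $1$ express the $a_i,b_i$, $2\le i\le k_1$ — in particular $2a_2=(k_1-\alpha_1)a_1e^{\lambda/c_1}$ and $2b_2=(2+\alpha_1-k_1)a_1e^{\lambda/c_1}$ — in terms of $a_1$; and finally the continuity and Kirchhoff relations at node $2$, together with the absorbing conditions on the leaves issued from node $2$, reduce everything to the single scalar equation $\big[(\alpha_1-k_1)(\alpha_2-k_2)e^{2\lambda/c_2}-(2+\alpha_1-k_1)(2+\alpha_2-k_2)\big]a_1=0$. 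Under \eqref{LL1} this has a solution with $a_1\ne0$ iff $e^{2\lambda/c_2}=\frac{(2+\alpha_1-k_1)(2+\alpha_2-k_2)}{(\alpha_1-k_1)(\alpha_2-k_2)}$ is solvable, i.e. iff the right-hand side is nonzero, i.e. iff \eqref{LL2} holds; and then the solution set is exactly $\{\lambda_k:k\in\Z\}$ as in \eqref{eigenvalue2}, using the determination $\log_{-\pi/2}$, exactly as in Proposition \ref{prop1}. Since a nonzero eigenvalue $\lambda$ gives the non-constant solution $U(t)=e^{\lambda t}U_0$ with $\|U(t)\|_\cH=e^{(\mathrm{Re}\,\lambda)t}\|U_0\|_\cH>0$, finite-time stability to constants fails whenever \eqref{LL2} holds.

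For the converse, assume \eqref{cond-FTS-new}. I would first reduce to the case $\alpha_1=k_1-2$, $\alpha_2\ne k_2$: re-rooting the tree at an external node issued from node $2$ turns the absorbing condition there into a transparent condition at the new root, turns the old root into an absorbing external node (the same reflection used in the proof of Theorem \ref{thm2}(iii)), and exchanges the roles of nodes $1$ and $2$. Working then with the Riemann invariants $d_i,s_i$ solving \eqref{C2}-\eqref{C1}: the transparent condition at the root gives $d_1(0,\cdot)\equiv0$, hence $d_1\equiv0$ on edge $1$ for $t\ge c_1^{-1}$; the absorbing conditions give $s_i(1,\cdot)\equiv0$, hence $s_i\equiv0$ on edge $i$ for $t\ge c_i^{-1}$, for every leaf edge $i$. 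Because $\alpha_1=k_1-2$, Lemma \ref{lem1}(1) at node $1$ provides \eqref{cle} and the relations \eqref{S3}-\eqref{S3ter}; combining \eqref{S3} with $d_1(1,\cdot)=0$, the vanishing of the $s_i(0,\cdot)$ at the leaves of node $1$, and \eqref{cle}, I get $d_2(0,\cdot)=0$ for $t$ large, hence $d_2(1,\cdot)=0$ for $t$ large. Then Lemma \ref{lem1}(1) at node $2$ — available since $\alpha_2\ne k_2$ — expresses $s_2(1,t)$ as a fixed linear combination of $d_2(1,t)$ and the $s_i(0,t)$ of the leaves at node $2$, all vanishing for $t$ large; so $s_2(1,\cdot)=0$, hence $s_2\equiv0$ on edge $2$, for $t$ large. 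Consequently \eqref{cle} at node $1$ gives $s_1(1,t)=\sum_{i=2}^{k_1}s_i(0,t)=0$, hence $s_1\equiv0$ on edge $1$, for $t$ large, so every $s_i$ vanishes on its edge for $t$ large; feeding this plus $d_1\equiv0$ back into \eqref{S3}-\eqref{S3ter} at node $1$ and Lemma \ref{lem1}(1) at node $2$ gives $d_i(0,\cdot)=0$, hence $d_i\equiv0$ on edge $i$, for $t$ large, for every $i$. Then $u_{i,t}=\tfrac12(s_i+d_i)=0$ and $u_{i,x}=\tfrac1{2c_i}(s_i-d_i)=0$ for $t$ large, so each $u_i$ is eventually constant, and the continuity relations force a common value $C$ — the claimed finite-time stability to constants, with an explicit extinction time obtained by tracking the propagation delays.

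The eigenvalue computation is essentially bookkeeping once the coefficients are eliminated in the right order; the only care needed is with signs and with the branch of the logarithm, as in Proposition \ref{prop1}. The real difficulty is in the converse: one must propagate the vanishing of the Riemann invariants through node $2$, where $\alpha_2\ne k_2-2$ so that \eqref{cle} is \emph{not} available and only the plain linear map of Lemma \ref{lem1}(1) may be used, and one must order the chain of implications (leaves $\Rightarrow$ $d_2(0,\cdot)$ via node $1$ $\Rightarrow$ $d_2(1,\cdot)$ $\Rightarrow$ $s_2(1,\cdot)$ via node $2$ $\Rightarrow$ $s_1$ via node $1$) so that no circular dependence arises; justifying the re-rooting reduction to $\alpha_1=k_1-2$ also requires a short argument.
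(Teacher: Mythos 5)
Your proposal is correct and follows essentially the same route as the paper: the same elimination of coefficients leading to the characteristic equation $(\alpha_1-k_1)(\alpha_2-k_2)e^{2\lambda/c_2}=(2+\alpha_1-k_1)(2+\alpha_2-k_2)$ (your intermediate relations $2a_2=(k_1-\alpha_1)a_1e^{\lambda/c_1}$ and $2b_2=(2+\alpha_1-k_1)a_1e^{\lambda/c_1}$ match the paper's system exactly), and the same ordered propagation of vanishing Riemann invariants (leaves $\Rightarrow d_2(0,\cdot)$ via \eqref{cle} at node $1$ $\Rightarrow s_2(1,\cdot)$ via Lemma \ref{lem1}(1) at node $2$ $\Rightarrow s_1$ and the remaining $d_i$) for the sufficiency of \eqref{cond-FTS-new}. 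Your explicit re-rooting argument just makes precise the symmetry the paper invokes with ``e.g.''.
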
  
\begin{proof} First, $A_T$ generates a strongly continuous semigroup of operators in $\mathcal H$ by \eqref{LL1} and Theorem \ref{thm1}. 
Let $\lambda \in \C$ and $U=(u_i,v_i)_{i\in \cI} \in D(A_T)$. Then the equation $A_TU=\lambda U$ is equivalent to the following system
\ba
&&(v_i,c_i^2u_i'')=\lambda (u_i,v_i) \label{ZZ1}\\
&&c_1u_1'(0)=v_1(0) \label{ZZ2} \\
&&c_iu_i'(1) = -v_i(1),\qquad  3\le i\le N \label{ZZ3} \\
&&\sum_{2\le i \le k_1} c_iu_i'(0) - c_1 u_1'(1) = -\alpha _1v_1(1) \label{ZZ4}\\
&&\sum_{k_1+1\le i \le N} c_iu_i'(0) - c_2 u_2'(1) = -\alpha _2v_2(1) \label{ZZ5}\\
&&u_i(0)=u_1(1), \qquad 2\le i\le k_1, \label{ZZ6}\\
&&u_i(0)=u_2(1), \qquad k_1+1\le i\le N. \label{ZZ7}
\ea
Note that the conditions $v_i(0)=v_1(1)$ for $2\le i\le k_1$  and $v_i(0)=v_2(1) $ for $k_1 + 1 \le i\le N$ are satisfied whenever \eqref{ZZ1} and \eqref{ZZ6}-\eqref{ZZ7} hold.
\eqref{ZZ1} is easily solved as
\be
u_i(x) = a_i e^{\lambda x/c_i} + b_i e^{-\lambda x/c_i},\ v_i=\lambda u_i, \ i\in \cI , \label{ZZZ6} 
\ee
where $a_i,b_i\in \C$ are constants to be determined. Substituting the above expression of $u_i(x)$ in \eqref{ZZ2}-\eqref{ZZ7}  yields the system
\ba
&&\lambda b_1=0, \label{ZZ10} \\
&&\lambda a_i=0, \qquad 3\le i\le N, \label{ZZ11}\\
&&\lambda \sum_{2\le i\le k_1 } (a_i-b_i) -\lambda (a_1 e^{\lambda /c_1} -b_1 e^{-\lambda /c_1} ) = -\alpha _1 \lambda
(a_1 e^{\lambda /c_1} + b_1 e^{-\lambda /c_1} ), \qquad \label{ZZ12}\\
&&\lambda \sum_{k_1+1 \le i\le N } (a_i-b_i) -\lambda (a_2 e^{\lambda /c_2} -b_2 e^{-\lambda /c_2} ) = -\alpha _2 \lambda
(a_2 e^{\lambda /c_2} + b_2 e^{-\lambda /c_2} ), \qquad \label{ZZ13}\\
&&a_i+b_i = a_1 e^{\lambda /c_1} +b_1 e^{-\lambda /c_1}, \qquad 2\le i\le k_1, \label{ZZ14} \\
&&a_i+b_i = a_2 e^{\lambda /c_2} +b_2 e^{-\lambda /c_2}, \qquad k_1+1\le i\le N. \label{ZZ15} 
\ea
If $\lambda =0$, we infer from \eqref{ZZ14}-\eqref{ZZ15} and \eqref{ZZZ6} that $u_i(x)=a_1+b_1$ for all $i\in \cI$, i.e.  
$U=const$, which is excluded. Assume from now on that $\lambda \ne 0$. Then
\eqref{ZZ10}-\eqref{ZZ15} is equivalent to the system 
\ba
&&b_1=0, \label{ZZ20} \\
&& a_i=0, \qquad 3\le i\le N, \label{ZZ21}\\
&& b_2=a_1e^{\lambda /c_1} - a_2,\label{ZZ22}\\
&&b_i= a_1e^{\lambda /c_1}, \qquad 3\le i\le k_1, \label{ZZ23}\\
&&b_i= a_2e^{\lambda /c_2} + b_2 e^{-\lambda /c_2} , \qquad k_1+1\le i\le N, \label{ZZ24}\\
&& 2a_2 + (\alpha _1 -k_1) e^{\lambda /c_1} a_1=0 ,\qquad  \label{ZZ25}\\
&& [(-N+k_1-1+\alpha _2 ) e^{\lambda /c_2} +(N-k_1-1-\alpha _2)e^{-\lambda /c_2} ] a_2\nonumber\\
&& \qquad  +
(-N+k_1+1+\alpha _2 ) e^{-\lambda /c_2} e^{\lambda /c_1} a_1=0.\qquad \label{ZZ26} 
\ea
The existence of a nontrivial solution ($(a_1,a_2)\ne (0,0)$) holds if, and only if, the 
determinant of the system \eqref{ZZ25}-\eqref{ZZ26} in $e^{\lambda /c_1}a_1$ and $a_2$ vanishes, i.e.
\[
(2+\alpha _1 -k_1)(-N+k_1+1+\alpha _2)e^{-\lambda /c_2} -(\alpha _1 -k_1) (-N+k_1-1+\alpha _2) e^{\lambda /c_2}=0.
\]
Since $-N+k_1=1-k_2$, this can be expressed as 
\[
(2+\alpha _1 -k_1)(2+\alpha _2-k_2)e^{-\lambda /c_2} -(\alpha _1 -k_1) (\alpha _2-k_2) e^{\lambda /c_2}=0.
\]
Using \eqref{LL1}, the last equation is equivalent to 
\be
e^{\frac{2\lambda}{c_2}} = \frac{(2+\alpha_1-k_1)(2+\alpha_2-k_2)}{(\alpha _1-k_1)(\alpha_2 -k_2)} 
 \cdot \label{Z6000}
\ee
\eqref{Z6000} has a solution $\lambda \in \C$ if and only if $(2+\alpha_1-k_1)(2+\alpha_2-k_2) \ne 0$, and in that case the solutions of \eqref{Z6000} read 
\[
\lambda _k = \frac{c_2}{2} \log_{-\frac{\pi}{2}}  \frac{(2+\alpha_1-k_1)(2+\alpha_2-k_2)}{(\alpha _1-k_1)(\alpha_2 -k_2)}  + ic_2k\pi, \quad k\in \Z.
\]
Assume finally that \eqref{cond-FTS-new} holds, e.g. $\alpha _1=k_1-2$ and $\alpha _2\in \R \setminus \{ k_2 \}$. Since transparent boundary conditions
are applied at all the external nodes, we have
\begin{eqnarray*}
s_i(1,t)&=&0, \qquad i=3,...,N, \ t\ge 0,\\
d_1(0,t)&=&0, \qquad t\ge 0.
\end{eqnarray*}
This implies
\ba
s_i(x,t)&=&0, \qquad i=3,...,N, \ x\in [0,1],\ t\ge c_i^{-1},\label{H1}\\
d_1(x,t)&=&0, \qquad x\in [0,1],\ t\ge c_1^{-1}.\label{H2}
\ea
It follows from \eqref{cle} and \eqref{H1} that 
\[
s_2(0,t)=s_1(1,t),\qquad t\ge \max_{3\le i\le k_1} c_i^{-1}.
\]
Combined with the continuity condition $u_1(1,t)=u_2(0,t)$, this yields
\[
d_2(0,t)=d_1(1,t)=0 \qquad t\ge \max_{i\in \{1\} \cup [3,k_1] }c_i^{-1}.
\]
The same argument as in Remark \ref{Rk2} shows that $s_2(1,t)=d_{k_1+1}(0,t)=\cdots =d_N(0,t)=0$ for $t$ large enough.
This in turn implies  $s_1(1,t)=d_3(0,t)=\cdots =d_{k_1}(0,t)=0$ for $t$ large enough. We conclude that for some constants $C$ and $T$, 
$u_i(x,t)=C$ for all $i\in\mathcal I$, all $x\in [0,1]$ and all $t\ge T$.   
\end{proof}

\end{document}